	\tikzstyle{lien}=[->,>=stealth,rounded corners=2pt,thick]
	\tikzset{individu/.style={draw,#1},individu/.default={}}
\newtheorem{theorem}{Theorem}
\newtheorem{corollary}[theorem]{Corollary}
\newtheorem{proposition}[theorem]{Proposition}
\newtheorem{lemma}[theorem]{Lemma}
\newtheorem{remark}[theorem]{Remark}
\begin{document}
\title{The cut-tree of large trees with small heights}
\author{G. Berzunza\footnote{ {\sc Institut f\"ur Mathematik, Universit\"at Z\"urich, Winterthurerstrasse 190, CH-8057 Z\"urich, Switzerland;} e-mail: gabriel.berzunza@math.uzh.ch}}
\maketitle

\vspace{0.1in}

\begin{abstract} 
\noindent We destroy a finite tree of size $n$ by
cutting its edges one after the other and in uniform random order. Informally, the 
associated cut-tree describes the genealogy of the connected 
components created by this destruction process. We provide
a general criterion for the convergence of the rescaled cut-tree 
in the Gromov-Prohorov topology to an interval endowed with the Euclidean distance and 
a certain probability measure, when the underlying tree has branching points close to the root 
and height of order $o(\sqrt{n})$. In particular, we consider 
uniform random recursive trees, binary search trees, scale-free random trees
and a mixture of regular trees. This yields extensions of a result in Bertoin \cite{Be3} for the cut-tree of 
uniform random recursive trees and also allows us to generalize some 
results of Kuba and Panholzer \cite{Kuba} on the multiple isolation
of vertices. The approach relies in the close relationship between
the destruction process and Bernoulli bond percolation, 
which may be useful for studying the cut-tree of other classes of trees.
\bigskip 

\noindent {\sc Key words and phrases}: Random trees, destruction of trees, percolation, 
Gromov-Prokhorov convergence.
\end{abstract}

\section{Introduction and main result}

\subsection{General introduction}

Consider a tree $T_{n}$ on a finite set of vertices, say $[n] := \{ 1, \dots, n\}$,
rooted at $1$. Imagine that we destroy it by cutting its edges one after the other, in a 
uniform random order. After $n-1$ steps, all edges have been destroyed and all the vertices are isolated. Meir and Moon 
\cite{MaM, MaM2} initiated the study of such procedure by considering the number of 
cuts required to isolate the root, when the edges are 
removed from the current component containing this distinguished vertex. More precisely, 
they estimated the first and second moments of this quantity for two important trees families, Cayley 
trees and random recursive trees. Concerning Cayley trees and other families of simply 
generated trees, a weak limit theorem for the number of cuts to isolate the root vertex 
was proven by Panholzer \cite{Panh} and, in greater generality by Janson \cite{Svante}
who also obtained the result for complete binary trees \cite{Svante2}. Holmgren 
\cite{Holmgren, Holmgren2} extended the approach of Janson to binary search trees and to
the family of split trees. For random recursive trees a limit law was obtained, first by
Drmota et al. \cite{Drmota} and reproved using a probabilistic approach by Iksanov and M\"ohle \cite{Iksanov}. 
\\

We observe that during the destruction process the cut of an edge induces the partition
of the subset (or block) that contains this edge into two sub-blocks of $[n]$. We then
encode the destruction of $T_{n}$ by a rooted binary tree, which we call the cut-tree 
and denote by $\text{Cut}(T_{n})$. The cut-tree has internal vertices given by the
non-singleton connected components which arise during the destruction, and leaves which
correspond to the singletons $ \{1\}, \dots, \{n\}$ (these can be identified as the
vertices of $T_{n}$). More precisely, the $\text{Cut}(T_{n})$ is rooted at the block $[n]$,
then we build it inductively: we draw an edge between a parent block $B$ and two
children blocks $B^{\prime}$ and $B^{\prime \prime}$ whenever an edge is removed from the 
subtree of $T_{n}$ with set of vertices $B$, producing two subtrees $B^{\prime}$ 
and $B^{\prime \prime}$. See Figure 1 for an illustration. \\

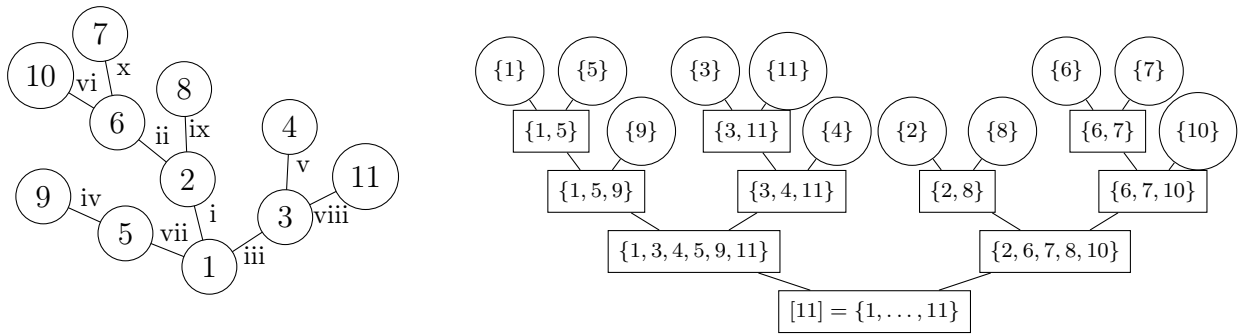
\begin{figure}[ht] \centering 
\begin{minipage}[b]{10\linewidth}
\begin{tikzpicture}[scale = 1.2]
\draw
	(0,0) -- (-0.24,0.97)			
	(-0.24,0.97) -- (-1.02,1.6)		
	(0,0) -- (0.84,0.55)			
	(-0.93,0.37) -- (-1.84,0.78)		
	(0.84,0.55) -- (0.89,1.55)		
	(-1.02,1.6) -- (-1.87,2.12)		
	(0,0) -- (-0.93,0.37)			
	(0.84,0.55) -- (1.73,1)		
	(-0.24,0.97) -- (-0.28,1.97)		
	(-1.02,1.6) -- (-1.21,2.58)		
;
\draw
	(0,0)			node [circle, fill=white, draw]	{1}
	(-0.24,0.97)	node [circle, fill=white, draw]	{2}
	(0.84,0.55)	node [circle, fill=white, draw]	{3}
	(0.89,1.55)	node [circle, fill=white, draw]	{4}
	(-0.93,0.37)	node [circle, fill=white, draw]	{5}
	(-1.02,1.6)		node [circle, fill=white, draw]	{6}
	(-1.21,2.58)	node [circle, fill=white, draw]	{7}
	(-0.28,1.97)	node [circle, fill=white, draw]	{8}
	(-1.84,0.78)	node [circle, fill=white, draw]	{9}
	(-1.87,2.12)	node [circle, fill=white, draw]	{10}
	(1.73,1)		node [circle, fill=white, draw]	{11}
;
\begin{footnotesize}
\draw
	(0.04,0.59)	node		{i}
	(-0.52,1.46)	node		{ii}
	(0.5,0.13)		node		{iii}
	(-1.31,0.77)	node		{iv}
	(1.04,1.11)	node		{v}
	(-1.36,2.05)	node		{vi}
	(-0.39,0.38)	node		{vii}
	(1.35,0.58)	node		{viii}
	(-0.11,1.54)	node		{ix}
	(-0.95,2.19)	node		{x}
;
\end{footnotesize}
\end{tikzpicture}
\end{minipage}
\ \ \hfill \begin{minipage}[b]{0.25\linewidth}
\vspace{-50cm}
\begin{tikzpicture}
[level 1/.style={sibling distance=4.8 cm},
level 2/.style={sibling distance=2.6 cm},
level 3/.style={sibling distance=1.2 cm},
level 4/.style={sibling distance=1.1 cm}]
%
\node[individu]{{\scriptsize$[11] = \{1,\dots,11\}$}} [grow'=up,level distance=.8cm]
	child{node[individu]{{\scriptsize$\{1,3,4,5,9,11\}$}}
		child{node[individu]{{\scriptsize$\{1,5,9\}$}}
			child{node[individu]{{\scriptsize $\{1,5\}$}}
				child{node[individu=circle]{{\scriptsize$\{1\}$}}}
				child{node[individu=circle]{{\scriptsize$\{5\}$}}}
			}
			child{node[individu=circle]{{\scriptsize$\{9\}$}}}
		}
		child{node[individu]{{\scriptsize$\{3,4,11\}$}}
			child{node[individu]{{\scriptsize$\{3,11\}$}}
				child{node[individu=circle]{{\scriptsize$\{3\}$}}}
				child{node[individu=circle]{{\scriptsize$\{11\}$}}}
			}
			child{node[individu=circle]{{\scriptsize$\{4\}$}}}
		}
	}
	child{node[individu]{{\scriptsize$\{2,6,7,8,10\}$}}
		child{node[individu]{{\scriptsize$\{2,8\}$}}
			child{node[individu=circle]{{\scriptsize$\{2\}$}}}
			child{node[individu=circle]{{\scriptsize$\{8\}$}}}
		}
		child{node[individu]{{\scriptsize$\{6,7,10\}$}}
			child{node[individu]{{\scriptsize$\{6,7\}$}}
				child{node[individu=circle]{{\scriptsize$\{6\}$}}}
				child{node[individu=circle]{{\scriptsize$\{7\}$}}}
			}
			child{node[individu=circle]{{\scriptsize$\{10\}$}}}
		}
	}
;
\end{tikzpicture}
\end{minipage}
\caption{{\small A tree of size eleven with the order of cuts on the left, and the corresponding
cut-tree on the right}}
\end{figure}

Roughly speaking, cut-trees describe the genealogy of connected 
components appearing in this edge-deletion process. They are especially useful 
in the study of the number of cuts needed to isolate any given subset of distinguished vertices, when the connected components which 
contain no distinguished points are discarded as soon as they appear. 
For instance, the number of cuts required to isolate $k$ distinct vertices 
$v_{1}, \dots, v_{k}$ coincides with the total length of the cut-tree reduced 
to its root and $k$ leaves $\{v_{1}\}, \dots, \{v_{k}\}$ minus $(k-1)$, where the length
is measured as usual by the graph distance on $\text{Cut}(T_{n})$. This 
motivated the study of the cut-tree for 
several families of trees. Bertoin \cite{Be} considered the cut-tree of Cayley trees, 
more generally, Bertoin and Miermont \cite{Be2} dealt with critical Galton-Watson trees with finite variance and conditioned to have size
$n$. More recently, Bertoin \cite{Be3} studied the uniform random recursive trees,
Dieuleveut \cite{Da} the Galton-Watson trees with offspring distribution belonging to the 
domain of attraction of a stable law of index $\alpha \in (1,2]$, and Broutin and 
Wang \cite{BoWa} the so-called $p$-trees. They described the asymptotic behavior (in distribution)
of the cut-trees when $n \rightarrow \infty$, for these classes of trees. We stress that in \cite{Be2, Da} the cut-tree slightly differs
from the one defined above, and in particular \cite{Da} considered a vertex removal 
procedure. \\ 

On the other hand, Baur \cite{bau2} has recently introduced another tree 
associated to the destruction process of uniform random recursive trees,
called \emph{tree of components}. Informally, one considers a dynamically version 
of the cutting procedure, where edges are equipped with i.i.d. exponential clocks and 
deleted at time given by the corresponding variable. Then, each removal of an edge gives
birth to a new tree component, whose sizes and birth times are encoding by a 
tree-indexed process. He used this tree of components to study cluster sizes created 
from performing Bernoulli bond percolation on uniform random recursive trees.
We do not study the tree of components in this work but, we think it would 
be of interest, and may be seen as a complement of the cut-tree. However, a common feature with our analysis is that, it is useful
to consider a continuous time version of the destruction process. \\

The main purpose of this work is study the behavior of
$\text{Cut}(T_{n})$ when the vertices of the underlying tree $T_{n}$ is star-shaped. 
Informally, we assume that the last common ancestor
of two randomly chosen vertices is close to the root, after proper rescaling, 
with high probability. We consider also that $T_{n}$ has a small height of order 
$o(\sqrt{n})$, in the sense that that the distance (the number of edges) between
its root $1$, and a typical vertex in $T_{n}$ is of this order $o(\sqrt{n})$. For instance, 
this is the case for uniform random recursive trees, binary search trees, scale-free random trees
and regular trees; see  for example Drmota \cite{Drmota2}, Barab\'asi \cite{Ba}, and 
Mahmound and Neininger \cite{Mah}. Informally, our main result provides a general criterion, depending on the nature of $T_{n}$, for
the convergence in distribution of the rescaled $\text{Cut}(T_{n})$ when $n \rightarrow \infty$.
\\

We next introduce the necessary notation and relevant 
background, which we will enable us to state our main result in Section \ref{main}.

\subsection{Measured metric spaces and the Gromov-Prokhorov topology}

We begin by introducing some basic facts about 
topological space of trees in which limits can be taken, and define the limit objects. A pointed metric measure space is a quadruple 
$(\mathcal{T}, d, \rho, \nu)$ where $(\mathcal{T}, d)$ is a separable and complete metric space, 
$\rho \in \mathcal{T}$ a distinguished element called the root of $\mathcal{T}$, and 
$\nu$ a Borel probability measure on $(\mathcal{T}, d)$.
This quadruple is called a real tree if in addition, $\mathcal{T}$ is a tree, in the 
sense that it is a geodesic space for which any two points are connected via a unique 
continuous injective path up to re-parametrization. This is a continuous analog of the 
graph-theoretic definition of a tree as a connected graph with no cycle. For sake of simplicity, we 
frequently write $\mathcal{T}$ to refer to a pointed metric measure space 
$(\mathcal{T}, d, \rho, \nu)$. We say that two measured rooted spaces $(\mathcal{T}, d, \rho, \nu)$ and
$(\mathcal{T}^{\prime}, d^{\prime}, \rho^{\prime}, \nu^{\prime})$ are isometry-equivalent if 
there exists a root-preserving, bijective isometry $\phi: \text{supp}(\mu) \cup \{\rho\} \rightarrow 
\mathcal{T}^{\prime}$ (here $\text{supp}$ is the topological support) such that 
the image of $\nu$ by $\phi$ is $\nu^{\prime}$. This defines an equivalence
relation between pointed metric measure spaces, and we note that representatives
$(\mathcal{T}, d, \rho, \nu)$ of a given isometry-equivalence class can always be assumed
to have $\text{supp}(\mu) \cup \{\rho\} = \mathcal{T}$. It is also convenient to agree that for $a>0$, 
$a \mathcal{T}$ denotes the same space $\mathcal{T}$ but with distance rescaled by the factor $a$, i.e.
$(\mathcal{T}, ad, \rho, \nu)$. \\

It is well-known that the set $\mathbb{M}$ of isometry-equivalence classes of pointed
metric spaces is a Polish space when endowed with the so-called Gromov-Prokhorov topology.
This topology was introduced by Greven, Pfaffelhuber and Winter in \cite{Greven}
under the name of {\sl Gromov-weak} topology. We also refer to Gromov's book \cite{Gromov}, 
the article of Haas and Miermont \cite{Haas} and references therein 
for background. We can then view the $\text{Cut}(T_{n})$ for $n \geq 1$ as a sequence random variables with values in $\mathbb{M}$
(i.e. a sequence of real random tree). For convenience, we adopt a slightly different point of view for $\text{Cut}(T_{n})$ than the usual for finite trees, focusing on leaves rather than internal nodes. More precisely,
we set $[n]^{0}= \{ 0,1, \dots, n\}$ where $0$ correspond to the root $[n]$ of 
$\text{Cut}(T_{n})$ and $1, \dots, n$ to the leaves (i.e. $i$ is identified with the singleton $\{ i \}$). We consider the random pointed metric 
measure space $([n]^{0}, \delta_{n}, 0, \mu_{n})$ where $\delta_{n}$ is the random graph distance
on $[n]^{0}$ induced by the cut-tree, $0$ is the distinguished element, and $\mu_{n}$
is the uniform probability measure on $[n]$ extended by $\mu_{n}(0) =0$. That is, $\mu_{n}$
is the uniform probability measure on the set of leaves of $\text{Cut}(T_{n})$. We 
point out that the combinatorial structure of the cut-tree can be recovered from
$([n]^{0}, \delta_{n}, 0, \mu_{n})$, so by a slight abuse of notation, sometimes 
we refer to $\text{Cut}(T_{n})$ as the latter pointed metric measure space. \\

Finally, we recall a convenient characterization of the Gromov-Prokhorov topology that relies on the 
convergence of distances between random points. A  sequence 
$(\mathcal{T}_{n}, d_{n}, \rho_{n}, \nu_{n})$ of pointed
measure metric spaces converges in the Gromov-Prokhorov sense to an element of 
$\mathbb{M}$, say $(\mathcal{T}_{\infty}, d_{\infty}, \rho_{\infty}, \nu_{\infty})$,
if and only if the following holds: for $n \in \{1,2, \dots \} \cup \{\infty \}$, set $\xi_{n}(0) = \rho_{n}$ and let $\xi_{n}(1), \xi_{n}(2), \dots$ be a sequence of i.i.d. random variables with law $\nu_{n}$, then 
\begin{eqnarray*} 
(d_{n}(\xi_{n}(i), \xi_{n}(j)): i,j \geq 0) \Rightarrow (d_{\infty}(\xi_{\infty}(i), \xi_{\infty}(j)): i, j \geq 0) 
\end{eqnarray*}

\noindent where $\Rightarrow$ means convergence in the sense of finite-dimensional distribution, $\xi_{\infty}(0) = \rho_{\infty} $
and $\xi_{\infty}(1), \xi_{\infty}(2), \dots$ is a sequence of i.i.d. random variables with law $\nu_{\infty}$; see for example Corollary 8
of \cite{Lo}. One can interpret $(d_{\infty}(\xi_{\infty}(i), \xi_{\infty}(j)): i, j \geq 0)$ as the matrix
of mutual distances between the points of an i.i.d. sample of $(\mathcal{T}_{\infty}, d_{\infty}, \rho_{\infty}, \nu_{\infty})$.
Moreover, it is important to point out that by the Gromov's reconstruction theorem
in \cite{Gromov}, the distribution of the above matrix of distances characterizes $(\mathcal{T}_{\infty}, d_{\infty}, \rho_{\infty}, \nu_{\infty})$
as an element of $\mathbb{M}$.

\subsection{Main result} \label{main}

We first introduce notation and hypotheses which will have an important
role for the rest of the work. Recall that $T_{n}$ is a tree with set of vertices
$[n] = \{ 1, \dots, n\}$, rooted at $1$. We denote by $u$ and $v$ two independent uniformly 
distributed random vertices on $[n]$. Let
$d_{n}$ be the graph distance in $T_{n}$, and  $\ell: \mathbb{N} \rightarrow \mathbb{R}_{+}$ be some function
such that $\lim_{n \rightarrow \infty} \ell(n) = \infty$. We introduce the following hypothesis
\begin{equation} \label{cond3}
  \frac{1}{\ell(n)} (d_{n}(1,u), d_{n}(u,v)) \Rightarrow (\zeta_{1}, \zeta_{1} + \zeta_{2}). \tag{$H$}
\end{equation}

\noindent where $\zeta_{1}$ and $\zeta_{2}$ are i.i.d. variables in $\mathbb{R}_{+}$ with
no atom at $0$. This happens with $\zeta_{i}$ a positive constant for some important families of random 
trees, such as uniform recursive trees, regular trees, scale-free random trees and binary search trees (and more generally
$b$-ary recursive trees). In Section \ref{exa}, we consider a different class of examples where the variable $\zeta_{i}$ 
is not a constant, which results of the mixture of similar trees satisfying the hypothesis (\ref{cond3}). 

\begin{remark} \label{rem1}
We observe that 
\begin{eqnarray*}
 d_{n}(u,v) = d_{n}(1,u) + d_{n}(1,v) - 2d_{n}(1, u \wedge v),
\end{eqnarray*}

\noindent where $u \wedge v$ is the last common ancestor of $u$ and $v$ in $T_{n}$. 
Then, the condition (\ref{cond3}) readily implies that $\lim_{n \rightarrow \infty} \ell(n)^{-1} d_{n}(1, u \wedge v) = 0$
in probability. Moreover, if for each fixed $k \in \mathbb{N}$, we denote by $L_{k,n}$ the length
of the tree $T_{n}$ reduced to $k$ vertices chosen uniformly at random with replacement 
and its root $1$, i.e. the minimal number of edges of $T_{n}$ which are needed to connect 
$1$ and such vertices, we see that (\ref{cond3}) is equivalent to 
\begin{equation*} 
 \frac{1}{\ell(n)} (L_{1,n}, L_{2,n}) \Rightarrow (\zeta_{1}, \zeta_{1} + \zeta_{2}).
\end{equation*}
\end{remark}

We then write
\begin{eqnarray*}
 \lambda(t) = \mathbb{E} [e^{-t \zeta_{1}}], \hspace*{5mm} \text{for} \hspace*{2mm} t \geq 0,
\end{eqnarray*}

\noindent for the Laplace transform of the random variable $\zeta_{1}$.
We henceforth denote
\begin{eqnarray*}
a = \mathbb{E}[1/\zeta_{1}],
\end{eqnarray*}

\noindent which can be infinite. We define the bijective mapping $\Lambda: [0, \infty) \rightarrow [0, a)$ by
\begin{eqnarray*}
 \Lambda(t) = \int_{0}^{t} \lambda(s) {\rm d}s, \hspace*{5mm} \text{for} \hspace*{3mm} t \geq 0,
\end{eqnarray*}

\noindent where $\Lambda(\infty) = \lim_{t \rightarrow \infty} \Lambda(t) = a$, and 
write $\Lambda^{-1}$ for its inverse mapping. Observe that (\ref{cond3}) entails that
\begin{equation*}
  \frac{1}{\ell(n)}  d_{n}(u,v) \Rightarrow  \zeta_{1} + \zeta_{2},
\end{equation*}

\noindent then we consider the next technical condition 
\begin{equation} \label{cond2}
 \lim_{n \rightarrow \infty} \mathbb{E} \left[ \frac{\ell(n)}{d_{n}(u, v)} \mathbf{1}_{\{ u \neq v \}} \right] = \mathbb{E}\left[\frac{1}{\zeta_{1} + \zeta_{2}}  \right]< \infty.  \tag{$H^{\prime}$}
\end{equation}


 \begin{theorem} \label{teo1}
 Suppose that (\ref{cond3}) and (\ref{cond2}) hold with $\ell$ such that
 $\ell(n) = o(\sqrt{n})$. Furthermore, assume that $a < \infty$. Then as $n \rightarrow \infty$, we have the following convergence 
 in distribution in the sense of the pointed Gromov-Prokhorov topology:
 \begin{eqnarray*}
  \frac{\ell(n)}{n} \text{{\rm Cut}}(T_{n}) \Rightarrow I_{\mu}.
 \end{eqnarray*}
 \noindent where $I_{\mu}$ is the pointed measure metric space given by the interval $[0,a]$, pointed at $0$, equipped
with the Euclidean distance, and the probability measure $\mu$ given by 
\begin{eqnarray} \label{ec2}
 \int_{0}^{a} f(x) \mu({\rm d} x) = - \int_{0}^{a} f(x) \, {\rm d} \lambda \circ \Lambda^{-1}(x)
\end{eqnarray}

\noindent where $f$ is a generic positive measurable function. The result still valid when $a=\infty$, and then one considers the interval $[0, \infty)$, pointed at $0$, equipped with the same distance and measure. 
\end{theorem}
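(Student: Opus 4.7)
The plan is to move to the continuous-time picture where each edge $e$ of $T_{n}$ carries an independent $\text{Exp}(1)$ clock $\tau_{e}$ and edges are cut in the order of their clocks; the combinatorial cut-tree $\text{Cut}(T_{n})$ is unchanged. By the i.i.d.-sample characterization of the Gromov-Prokhorov topology recalled at the end of Section 1.2, it suffices to draw $u_{1},u_{2},\ldots$ i.i.d.\ uniformly in $[n]$ and show that the matrix $\bigl((\ell(n)/n)\delta_{n}(0,\{u_{i}\}),\,(\ell(n)/n)\delta_{n}(\{u_{i}\},\{u_{j}\})\bigr)_{i,j\geq 1}$ converges in finite-dimensional distributions to $(U_{i},|U_{i}-U_{j}|)_{i,j}$, where the $U_{i}$'s are i.i.d.\ with law $\mu$.

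For each $i$, denote by $\pi_{i}$ the path from $u_{i}$ to the root $1$ in $T_{n}$ and by $T_{i}:=\min\{\tau_{e}:e\in\pi_{i}\}$ the first time an edge of $\pi_{i}$ is cut. By (\ref{cond3}) together with Remark \ref{rem1}, the rescaled path lengths $|\pi_{i}|/\ell(n)$ converge jointly in $i$ to i.i.d.\ copies $\zeta_{1}^{(i)}$ of $\zeta_{1}$, while the paths $\pi_{i}$ and $\pi_{j}$ share only $o(\ell(n))$ edges near the root. Hence $T_{i}\ell(n)\Rightarrow Z_{i}:=W_{i}/\zeta_{1}^{(i)}$ jointly, with $W_{i}$ i.i.d.\ $\text{Exp}(1)$, and the first time $u_{i}$ is separated from $u_{j}$, namely $T_{ij}:=\min\{\tau_{e}:e\in\pi(u_{i},u_{j})\}$, satisfies $T_{ij}\ell(n)\Rightarrow\min(Z_{i},Z_{j})$. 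A direct computation shows that $Z_{i}$ has density $\mathbb{E}[\zeta_{1}e^{-x\zeta_{1}}]=-\lambda'(x)$ on $[0,\infty)$.

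The heart of the proof is the claim $(\ell(n)/n)\delta_{n}(0,\{u_{i}\})\Rightarrow\Lambda(Z_{i})$. Writing $M_{t}(u_{i})$ for the number of cuts occurring inside the cluster of $u_{i}$ up to time $t$, we have $\delta_{n}(0,\{u_{i}\})=M_{\infty}(u_{i})$, and I decompose it as $M_{T_{i}^{-}}(u_{i})+R_{i}$, splitting at the moment $u_{i}$ is disconnected from the root. For Phase~1: for $t<T_{i}$ the cluster of $u_{i}$ coincides with the cluster $C_{t}(1)$ of the root, and I prove the law-of-large-numbers statement $|C_{s/\ell(n)}(1)|/n\to\lambda(s)$ in probability, via a second-moment computation: by (\ref{cond3}) the covariance $\mathbb{P}(u,v\in C_{t}(1))-\mathbb{P}(u\in C_{t}(1))\mathbb{P}(v\in C_{t}(1))$ is asymptotically $(s/\ell(n))\mathbb{E}[e^{-s(\zeta_{1}+\zeta_{2})}d_{n}(1,u\wedge v)/\ell(n)]$, which is negligible by Remark \ref{rem1} (here (\ref{cond2}) is used to upgrade distributional to integrable control). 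The martingale part of $M_{t}(u_{i})$ relative to its compensator $\int_{0}^{t}(|C_{r}(1)|-1)\,dr$ has quadratic variation equal to the compensator, hence is of order $\sqrt{n/\ell(n)}$, which vanishes after rescaling by $\ell(n)/n$ precisely because $\ell(n)=o(\sqrt{n})$. A change of variables $s=r\ell(n)$ then yields
\begin{equation*}
\frac{\ell(n)}{n}\,M_{T_{i}^{-}}(u_{i})\;\Longrightarrow\;\int_{0}^{Z_{i}}\lambda(s)\,ds\;=\;\Lambda(Z_{i}).
\end{equation*}
The Phase~2 term $R_{i}$ is bounded by the size of the subtree of $T_{n}$ severed on $u_{i}$'s side at the cut, and the same LLN combined with the control on $M_{T_{i}^{-}}(u_{i})$ forces $R_{i}=o_{\mathbb{P}}(n/\ell(n))$. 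I expect this Phase~2 negligibility, and the joint control of the cluster LLN along the random time $T_{i}$, to be the main technical obstacles.

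Granted the central claim, pairwise distances follow for free: for $t<T_{ij}$ the cluster of $u_{i}$ equals that of $u_{j}$ equals $C_{t}(1)$, so the same Phase~1 analysis gives $(\ell(n)/n)\delta_{n}(0,\{u_{i}\}\wedge\{u_{j}\})\Rightarrow\Lambda(\min(Z_{i},Z_{j}))=\min(U_{i},U_{j})$, where $U_{i}:=\Lambda(Z_{i})$, and substitution in $\delta_{n}(\{u_{i}\},\{u_{j}\})=\delta_{n}(0,\{u_{i}\})+\delta_{n}(0,\{u_{j}\})-2\delta_{n}(0,\{u_{i}\}\wedge\{u_{j}\})$ yields $|U_{i}-U_{j}|$ in the limit. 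Finally the law of $U_{i}$ is identified from $\mathbb{P}(U_{i}\leq u)=\mathbb{P}(Z_{i}\leq\Lambda^{-1}(u))=1-\lambda(\Lambda^{-1}(u))$, so that the density of $U_{i}$ equals $-d(\lambda\circ\Lambda^{-1})/du$, matching the measure $\mu$ in (\ref{ec2}). The case $a=\infty$ requires no modification beyond replacing $[0,a]$ by $[0,\infty)$, since $\mu$ remains a probability measure.
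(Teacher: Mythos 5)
Your overall architecture coincides with the paper's: pass to continuous time, identify the configuration at time $t$ with Bernoulli percolation, prove a law of large numbers for the root cluster, control the cut-counting process by its compensator using $\ell(n)=o(\sqrt n)$, evaluate at the (asymptotically $\lambda$-distributed) disconnection times, and reduce pairwise distances to the one-point case via a min/max (equivalently, LCA) identity. Those parts are sound. However, there is a genuine gap in your Phase~2 argument, and it sits exactly where the hypothesis (\ref{cond2}) must do its work. You bound $R_i$ by the size of the subtree severed on $u_i$'s side at time $T_i$ and assert that ``the same LLN'' makes this $o_{\mathbb P}(n/\ell(n))$. This bound is too crude: using the identity
\begin{equation*}
\mathbb E\Bigl[n^{-1}\bigl(X_n^{(u)}(t)-1\bigr)\mathbf 1_{\{1\notin T_n^{(u)}(t)\}}\Bigr]
=\mathbb E\Bigl[\bigl(e^{-t d_n(u,v)/\ell(n)}-e^{-t L_{2,n}/\ell(n)}\bigr)\mathbf 1_{\{u\neq v\}}\Bigr]
\end{equation*}
one checks (e.g.\ for the uniform recursive tree, where $d_n(1,u\wedge v)=O(1)$) that the severed component containing $u_i$ has expected size of order $n/\ell(n)$, not $o(n/\ell(n))$; the cluster LLN only gives $o(n)$ at fixed times and says nothing at the random time $T_i$. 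So the claimed negligibility does not follow from what you have established.

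The correct route — the paper's Proposition \ref{cor5} — is to keep the counting-process structure after disconnection: the number $N^{(u)}(n)$ of post-disconnection cuts has compensator $\int_0^\infty \ell(n)^{-1}(X_n^{(u)}(\Gamma_n+s)-1)\,{\rm d}s$, the martingale part is again killed by $\ell(n)=o(\sqrt n)$, and integrating the displayed identity over $t$ gives
\begin{equation*}
\mathbb E\Bigl[\int_0^\infty n^{-1}\bigl(X_n^{(u)}(\Gamma_n+s)-1\bigr)\,{\rm d}s\Bigr]
=\mathbb E\Bigl[\Bigl(\tfrac{\ell(n)}{d_n(u,v)}-\tfrac{\ell(n)}{L_{2,n}}\Bigr)\mathbf 1_{\{u\neq v\}}\Bigr],
\end{equation*}
which tends to $0$ because \emph{both} expectations converge to $\mathbb E[1/(\zeta_1+\zeta_2)]$ — the first by (\ref{cond2}) itself, the second because $d_n(u,v)\le L_{2,n}=d_n(u,v)+d_n(1,u\wedge v)$ and $d_n(1,u\wedge v)=o_{\mathbb P}(\ell(n))$. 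Relatedly, you invoke (\ref{cond2}) in the second-moment computation for the cluster LLN, but it is not needed there (the exponentials are bounded by $1$, so bounded convergence plus (\ref{cond3}) and Remark \ref{rem1} suffice); its real and only role is the cancellation above. I would also note that your sharper bound $\mathbb E[M_t^2]=\mathbb E[\langle M\rangle_t]=O(n/\ell(n))$ for the Phase~1 martingale is correct and slightly improves on the paper's crude bound $[M]_t\le n-1$, though both suffice under $\ell(n)=o(\sqrt n)$.
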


We stress that Theorem \ref{teo1} does not apply for the family of 
critical Galton-Watson trees conditioned to have size $n$ considered for
Bertoin and Miermont \cite{Be2} and Dieuleveut \cite{Da} since they do not 
satisfy the condition (\ref{cond3}), and the height of a typical vertex 
is not of the order $o(\sqrt{n})$. For instance, the case when $T_{n}$ is a Cayley tree
(conditioned Galton-Watson tree with Poisson offspring distribution), for which it is 
know that $\ell(n) = \sqrt{n}$ and the variable $L_{i,n}$ in Remark \ref{rem1}, for $i=1,2$,
is a chi-variable with $2k$ degrees of freedom; see for example Aldous \cite{A}. We believe that the 
threshold $\sqrt{n}$ appearing in this work is critical, and that for 
trees with larger heights (of order $\Omega(\sqrt{n})$ following Knut's definition) the limit of their
rescaled cut-tree is a random tree, and not a deterministic one. For instance, in the case when $T_{n}^{(c)}$ is a Cayley tree of 
size $n$, it has been shown in \cite{Be} that $n^{-1/2} \text{Cut}(T_{n}^{(c)})$ converges in distribution to a 
Brownian Continuum Random tree, in the sense of Gromov-Hausdorff-Prokhorov.
This uses crucially a general limit theorem due to Haas and Miermont \cite{Haas} for
so-called Markov branching trees. This has been extended in \cite{Be2} 
to a large family of critical Galton-Watson trees with finite variance, and by  
Dieuleveut \cite{Da} when the offspring distribution belongs to the domain
of attraction of a stable law of index $\alpha \in (1, 2]$, both in the sense
of Gromov-Prokhorov. We point out that in \cite{Da} the limit is a 
stable random tree of index $\alpha$. \\

On the other hand, it has been shown in \cite{Be3} for a 
uniform random recursive tree $T_{n}^{(r)}$
of size $n$ that upon rescaling the graph distance of $\text{Cut}(T_{n}^{(r)})$ by a
factor $n^{-1}\ln n$, the latter converges in probability
in the sense of pointed Gromov-Hausdorff-Prokhorov distance to the unit interval
$[0,1]$ equipped with the Euclidean distance and the Lebesgue measure, and pointed at $0$.
The basic idea in \cite{Be3} for establishing the result for uniform random recursive trees 
relies crucially on a coupling due to Iksanov and M\"ohle \cite{Iksanov} that connects the destruction
process in this family of trees with a remarkable random walk. However, this coupling
is not fulfilled in general for the trees we are interested in, and we thus have to use a fairly different 
route. \\

Loosely speaking, our approach relies on the introduction of
a continuous version of the cutting down procedure, where edges are equipped with i.i.d.
exponential random variables and removed at a time given by the corresponding variable. Following 
Bertoin \cite{Be5} we represent the destruction process up to 
a certain finite time as a Bernoulli bond-percolation, allowing us to relate the tree 
components with percolation clusters. We then develop the ideas in \cite{Be5} used to 
analyze cluster sizes in supercritical percolation, and study the asymptotic behavior
of the process that counts the number of edges which are remove from the root as 
time passed, which is closely related with the distance induced by the cut-tree. \\

The plan of the rest of this paper is as follows. Section \ref{sec2} is devoted 
to the continuous-time version of the destruction procedure on a general
random tree, which will play a crucial role in our analysis of the cut-tree. We then
establish our main result Theorem \ref{teo1} in Section \ref{sec3}. In Section
\ref{exa}, we provide some examples of trees that fulfill the hypotheses (\ref{cond3}) and 
(\ref{cond2}). Then in Section \ref{sec4} we present some applications
on the isolation of multiple vertices, which extend the results
of Kuba and Panholzer \cite{Kuba}, and Baur and Bertoin \cite{Bau} for uniform
random recursive trees. Section \ref{sec5} is devoted to the proof of a technical result
about the shape of scale-free random trees, which may be of independent interest. 



\section{Cutting down in continuous time} \label{sec2}

The purpose of this section is to study the destruction dynamics on a general 
sequence of random trees $T_{n}$. We consider a continuous time version of the 
destruction process in which edges are removed independently one of the others
at a given rate. We establish the link with Bernoulli bond-percolation and deduce
some properties related to the destruction process, which will 
be relevant for the proof of Theorem \ref{teo1}. \\

Recall that for each fixed $k \in \mathbb{N}$, we denote by $L_{k,n}$ the length
of the tree $T_{n}$ reduced to $k$ vertices chosen uniformly at random with replacement 
and its root $1$. Recall also the Remark \ref{rem1} and then consider the following weaker version of the hypothesis (\ref{cond3}),
\begin{equation} \label{cond}
 \frac{1}{\ell(n)} L_{k,n} \Rightarrow \zeta_{1} + \dots + \zeta_{k}, \tag{$H_{k}$}
\end{equation}

\noindent where $\zeta_{1}, \dots$ is a sequence of i.i.d. variables in $\mathbb{R}_{+}$ with
no atom at $0$, and the convergence in (\ref{cond}) is in the sense of one-dimensional distribution,
i.e. for each fixed $k$. We stress that the hypothesis (\ref{cond3}) implies (\ref{cond}) for $k =1,2$. \\

We then present the continuous time version of the destruction process. We attach to each edge 
$e$ of $T_{n}$ an independent exponential random variable $\mathbf{e}(e)$ of parameter 
$1/\ell(n)$, and we delete it at time $\mathbf{e}(e)$. After the $(n-1)$th edge has been 
deleted, the tree has been destructed, and the process ends. Rigorously, let $e_{1}, \dots, e_{n-1}$
denote the edges of $T_{n}$ listed in the increasing order of their attached exponential
random variables, i.e. such that $\mathbf{e}(e_{1}) < \dots < \mathbf{e}(e_{n-1})$. Then at time 
$\mathbf{e}(e_{1})$, the first edge $e_{1}$ is removed from $T_{n}$, and $T_{n}$ splits into two 
subtrees, say $\tau_{n}^{1}$ and $\tau_{n}^{*}$, where $\tau_{n}^{1}$ contains the root $1$.
Next, if $e_{2}$ connects two vertices in $\tau_{n}^{*}$ then at time $\mathbf{e}(e_{2})$, 
$\tau_{n}^{*}$ splits in two tree components. Otherwise, $\tau_{n}^{1}$ splits in two subtrees
after removing the edge $e_{2}$. We iterate in an obvious way until all the vertices of $T_{n}
$ have been isolated. \\

Define $p_{n}(t) = \exp (-t/ \ell(n))$ for $t \geq 0$, and observe that the probability 
that a given edge has not yet been removed at time $t$ in the continuous time destruction process
is $p_{n}(t)$. Thus, the configuration observed at time $t$ is precisely that resulting 
from a Bernoulli bond percolation on $T_{n}$ with parameter $p_{n}(t)$. Further, Bertoin \cite{Be4} proved that
when the hypothesis (\ref{cond}) is fulfilled  for $k=1,2$, the percolation parameter $p_{n}(t)$ 
corresponds to the supercritical regime, in the sense that with 
high probability, there exists a giant cluster, that is of size (number of vertices) comparable to that of the
entire tree. Thus focusing on the evolution of the tree component which contains the root
$1$, we write $X_{n}(t)$ for its size at time $t \geq 0$; plainly $X_{n}(t) \leq n$. 
We shall establish the following limit theorem which is an improvement of Corollary 1 (i) in \cite{Be4}.

\begin{proposition} \label{cor2}
 Suppose that (\ref{cond}) holds for $k =1,2$. Then, we have that
 \begin{eqnarray} \label{ec6}
  \lim_{n \rightarrow \infty} \sup_{ s \geq 0} |n^{-1}X_{n}(s) - \lambda(s)| = 0 \hspace*{6mm} \text{in probability}.
 \end{eqnarray}
\end{proposition}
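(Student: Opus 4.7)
The strategy is to compute the first and second moments of $n^{-1} X_{n}(t)$ from the percolation representation, identify their limits using the hypothesis (\ref{cond}) for $k = 1, 2$, and then upgrade the resulting pointwise convergence in probability to the uniform convergence (\ref{ec6}) by monotonicity.

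For the moments, recall that at time $t$ the surviving edges of $T_{n}$ form a Bernoulli bond percolation with parameter $p_{n}(t) = e^{-t/\ell(n)}$, so that a vertex $v$ lies in the root cluster if and only if all $d_{n}(1,v)$ edges on the path from $1$ to $v$ have survived. Writing $u$ (respectively $u, v$) for one (respectively two) vertices sampled uniformly at random from $[n]$, independent of everything else, it follows that
\begin{equation*}
n^{-1} \mathbb{E}[X_{n}(t)] = \mathbb{E}\bigl[\exp(-t\, d_{n}(1,u)/\ell(n))\bigr]
\quad\text{and}\quad
n^{-2}\mathbb{E}[X_{n}(t)^{2}] = \mathbb{E}\bigl[\exp(-t\, L_{2,n}/\ell(n))\bigr],
\end{equation*}
the key point for the second identity being that $u$ and $v$ both belong to the root cluster precisely when every edge in the union of the two root-paths has survived, and this union contains exactly $L_{2,n}$ edges. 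By (\ref{cond}) for $k = 1, 2$ together with bounded convergence (the integrands are at most $1$), one obtains $n^{-1}\mathbb{E}[X_{n}(t)] \to \lambda(t)$ and $n^{-2}\mathbb{E}[X_{n}(t)^{2}] \to \mathbb{E}[e^{-t(\zeta_{1}+\zeta_{2})}] = \lambda(t)^{2}$; hence $\mathrm{Var}(n^{-1}X_{n}(t)) \to 0$ and $n^{-1}X_{n}(t) \to \lambda(t)$ in $L^{2}$, and therefore in probability, for each fixed $t \geq 0$.

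To promote this to the uniform convergence (\ref{ec6}), I would exploit monotonicity: the random function $s \mapsto n^{-1}X_{n}(s)$ is non-increasing since the root cluster can only shrink, while $\lambda$ is continuous and non-increasing with $\lambda(0) = 1$ and $\lambda(\infty) = 0$ (the latter because $\zeta_{1}$ has no atom at $0$). A standard Dini-type approximation then suffices: given $\varepsilon > 0$, fix $T$ with $\lambda(T) < \varepsilon/2$ and a partition $0 = s_{0} < s_{1} < \dots < s_{N} = T$ on which $\lambda$ oscillates by less than $\varepsilon/2$; convergence in probability at each $s_{i}$ combined with the monotone sandwich $n^{-1}X_{n}(s_{i}) \leq n^{-1}X_{n}(s) \leq n^{-1}X_{n}(s_{i-1})$ (and the analogous inequalities for $\lambda$) on each $[s_{i-1}, s_{i}]$ then forces $\sup_{s \geq 0}|n^{-1}X_{n}(s) - \lambda(s)| < \varepsilon$ with probability tending to one, the bound for $s \geq T$ being obtained from $n^{-1}X_{n}(s) \leq n^{-1}X_{n}(T)$ and $\lambda(s) \leq \lambda(T) < \varepsilon/2$. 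The main obstacle here is not conceptual but organisational: one has to recognise that the second-moment computation pairs exactly with the functional $L_{2,n}$ introduced in (\ref{cond}), so that no assumption beyond (\ref{cond}) for $k = 1, 2$ is required.
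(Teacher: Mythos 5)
Your proof is correct, and its overall skeleton (pointwise convergence in probability for each fixed $t$, then an upgrade to uniformity via monotonicity of $s\mapsto X_{n}(s)$, continuity of $\lambda$, and $\lambda(\infty)=0$) is the same as the paper's. The one real difference is that the paper simply imports the pointwise statement from Corollary 1(i) of Bertoin \cite{Be4}, whereas you re-derive it via the first and second moment identities $n^{-1}\mathbb{E}[X_{n}(t)]=\mathbb{E}[e^{-t\,d_{n}(1,u)/\ell(n)}]$ and $n^{-2}\mathbb{E}[X_{n}(t)^{2}]=\mathbb{E}[e^{-t\,L_{2,n}/\ell(n)}]$, which is precisely where the hypothesis (\ref{cond}) for $k=1,2$ enters; this makes the argument self-contained at the cost of reproving a cited result. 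For the uniformization you run a direct Dini-type argument in probability at finitely many partition points, while the paper first extracts a subsequence along which the convergence holds almost surely at all rationals and then applies the same monotone sandwich; the two devices are interchangeable here, and yours is arguably the more economical. No gaps.
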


\begin{proof}
 It follows from Corollary 1(i) in \cite{Be4} that for $t \geq 0$
\begin{eqnarray*}
 \lim_{n \rightarrow \infty} n^{-1} X_{n}(t) = \lambda(t) \hspace*{6mm} \text{in probability},
\end{eqnarray*}
where $\lambda(t) = \mathbb{E}(e^{-t \zeta_{1}})$ for $t \geq 0$, when ever (\ref{cond}) holds for $k =1,2$.
Then by the diagonal procedure, we may extract from an arbitrary 
increasing sequence of integers a subsequence, say $(n_{l})_{l \in \mathbb{N}}$, such 
that with probability one,
\begin{eqnarray*} 
   \lim_{l \rightarrow \infty} n^{-1}_{l}X_{n_{l}}(s) = \lambda(s) \hspace*{6mm} \text{for all rational} \hspace*{2mm} s \geq 0.
\end{eqnarray*}

\noindent As $s \rightarrow X_{n}(s)$ decreases, and $s \rightarrow \lambda(s)$ is continuous, the above convergence holds uniformly on 
$[0,t]$ for an arbitrary fixed $t > 0$, i.e.
\begin{eqnarray} \label{ecc1}
  \lim_{l \rightarrow \infty} \sup_{0 \leq s \leq t} |n^{-1}_{l}X_{n_{l}}(s) - \lambda(s)| = 0 \hspace*{6mm} \text{a.s.}. 
 \end{eqnarray}

On the other hand, we observe that $\lim_{s \rightarrow \infty} \lambda(s) = 0$.
Then for $\varepsilon > 0$, we can find $t_{\varepsilon} > 0$ and $N(\varepsilon) >0$ such that
\begin{eqnarray*}
  \sup_{ s > t_{\varepsilon}} |n^{-1}_{l}X_{n_{l}}(s) - \lambda(s)| < \varepsilon \hspace*{6mm} \text{for} \hspace*{2mm} n_{l} > N(\varepsilon), \hspace*{4mm} \text{a.s.},
 \end{eqnarray*}

\noindent and therefore, our claim follows by combining (\ref{ecc1}) and the above observation.
\end{proof}

It is interesting to recall that the reciprocal of Proposition \ref{cor2} holds. 
More precisely, Corollary 1 (ii) in \cite{Be4} shows that (\ref{cond}), for $k =1,2$, form
a necessarily and sufficient condition for (\ref{ec6}). \\

In order to make the connexion with the discrete destruction process introduced at the beginning of
this work, which is the one we are interested in, we now turn our attention to the number 
$R_{n}(t)$ of edges of the current root component which have been removed up to time $t$ in the 
procedure described above. We observe that every jump of the process $R_{n} = (R_{n}(t): t \geq 0)$ corresponds
to removing an edge from the root component according to the discrete destruction process. 
We interpret the latter as a continuous time version  of a random algorithm introduced
by Meir and Moon \cite{MaM, MaM2} for the isolation of the root. Recall also that 
\begin{eqnarray*}
 \Lambda(t) = \int_{0}^{t}\lambda(s) {\rm ds}, \hspace*{5mm} \text{for} \hspace*{2mm} t \geq 0.
\end{eqnarray*}

\begin{lemma} \label{lema1}
Suppose that (\ref{cond}) holds for $k =1,2$, with $\ell$ such that $\ell(n) = o(\sqrt{n})$. Then, we have for
 every fixed $t >0$
 \begin{eqnarray*}
  \lim_{n \rightarrow \infty} \sup_{0 \leq s \leq t} \left| \frac{\ell(n)}{n} R_{n}(s) - \Lambda(s) \right| = 0 \hspace*{6mm} \text{in probability}.
 \end{eqnarray*}
\end{lemma}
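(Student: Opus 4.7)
My plan is to identify $R_n$ as a counting process and study its Doob--Meyer decomposition. At time $s^-$, the root component is a tree on $X_n(s^-)$ vertices, hence carries exactly $X_n(s^-)-1$ edges, each equipped with an independent exponential clock of parameter $1/\ell(n)$. By the memoryless property, $R_n$ therefore has stochastic intensity $(X_n(s^-)-1)/\ell(n)$ with respect to the natural filtration of the continuous-time destruction. Setting
\[
A_n(t) := \int_{0}^{t} \frac{X_n(s^-)-1}{\ell(n)}\, {\rm d}s, \qquad M_n(t) := R_n(t)-A_n(t),
\]
the process $M_n$ is a martingale with predictable bracket $\langle M_n\rangle_t=A_n(t)$. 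The strategy is then to control $A_n$ by means of Proposition \ref{cor2} and $M_n$ by means of Doob's $L^2$-inequality.

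For the compensator, rescaling gives
\[
\frac{\ell(n)}{n}\, A_n(s) = \int_{0}^{s} \frac{X_n(u^-)-1}{n}\, {\rm d}u,
\]
and Proposition \ref{cor2} supplies the uniform (in $u\ge 0$) convergence $n^{-1}X_n(u)\to\lambda(u)$ in probability. Because the integration runs over the bounded interval $[0,s]\subset[0,t]$, this transfers directly to
\[
\sup_{0\le s\le t}\left|\frac{\ell(n)}{n}\, A_n(s)-\Lambda(s)\right|\longrightarrow 0\qquad \text{in probability},
\]
the $-1/n$ term contributing a uniformly negligible $O(t/n)$.

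The remaining step, which I regard as the main technical point, is to show that $(\ell(n)/n)M_n$ vanishes uniformly on $[0,t]$. Since $X_n\le n$, one has $A_n(t)\le nt/\ell(n)$, and Doob's $L^2$-maximal inequality gives
\[
\mathbb{E}\left[\sup_{0\le s\le t} M_n(s)^2\right] \le 4\, \mathbb{E}[A_n(t)] \le \frac{4nt}{\ell(n)},
\]
so
\[
\mathbb{E}\left[\sup_{0\le s\le t}\left(\frac{\ell(n)}{n}\, M_n(s)\right)^{2}\right] \le \frac{4t\,\ell(n)}{n},
\]
which tends to $0$ by the hypothesis $\ell(n)=o(\sqrt{n})$ (in fact $\ell(n)=o(n)$ would already suffice for this estimate). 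Combining the compensator and martingale bounds via the triangle inequality yields the claimed uniform convergence.
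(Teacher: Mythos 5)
Your proof is correct and rests on the same Doob--Meyer decomposition $R_n = M_n + A_n$ with compensator density $\ell(n)^{-1}(X_n-1)$ that the paper uses, but the two halves are handled differently, and in both places your route is arguably cleaner. For the martingale part, the paper bounds the \emph{optional} bracket by the number of jumps, $[M_n]_t \le n-1$, and invokes Burkholder--Davis--Gundy to get $\mathbb{E}[M_n(t)^2]\le n-1$; this is exactly where the hypothesis $\ell(n)=o(\sqrt{n})$ is consumed. You instead use the \emph{predictable} bracket $\langle M_n\rangle_t = A_n(t)\le nt/\ell(n)$ (valid since $A_n$ is continuous) together with Doob's $L^2$-maximal inequality, yielding $\mathbb{E}\bigl[\sup_{s\le t}(\tfrac{\ell(n)}{n}M_n(s))^2\bigr]\le 4t\,\ell(n)/n$, which indeed only needs $\ell(n)=o(n)$ --- a genuine (if modest) sharpening of this particular lemma, correctly flagged in your last remark. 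For the uniformity in $s$, the paper first proves convergence of $\tfrac{\ell(n)}{n}R_n(t)$ for each fixed $t$ and then upgrades to uniform convergence via monotonicity of $R_n$, continuity of $\Lambda$, and a diagonal/subsequence argument; you get uniformity directly, since Doob's maximal inequality already controls $\sup_s |M_n(s)|$ and the compensator bound $\sup_{s\le t}|\tfrac{\ell(n)}{n}A_n(s)-\Lambda(s)|\le t\sup_{u}|n^{-1}X_n(u)-\lambda(u)|+t/n$ follows from the uniform statement of Proposition \ref{cor2}. Both arguments are valid; yours avoids the subsequence extraction and makes the dependence on $\ell(n)$ transparent.
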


\begin{proof}
 We denote by $X_{n} = (X_{n}(t): t \geq 0)$ the process
of the size of the root cluster. The dynamics of the continuous time destruction process show that the 
counting process $R_{n}$ grows at rate $\ell(n)^{-1}(X_{n}-1)$, which means 
rigorously that the predictable compensator of $R_{n}(t)$ is absolutely continuous with 
respect to the Lebesgue measure with density $\ell(n)^{-1}(X_{n}(t)-1)$. In other words,
\begin{eqnarray*}
 M_{n}(t) = R_{n}(t) - \int_{0}^{t} \ell(n)^{-1}(X_{n}(s)-1) {\rm d}s
\end{eqnarray*}

\noindent is a martingale; note also that its jumps $|M_{n}(t)-M_{n}(t-)|$ have size
at most $1$. Since there are at most $n-1$ jumps up to time $t$, the bracket of 
$M_{n}$ can be bounded by $[M_{n}]_{t} \leq n-1$. By Burkholder–Davis–Gundy inequality, 
we have that
\begin{eqnarray*}
 \mathbb{E}[|M_{n}(t)|^{2}] \leq n-1,
\end{eqnarray*}

\noindent and in particular, since we assumed that $\ell(n) = o(\sqrt{n})$,
\begin{eqnarray} \label{ec5}
 \lim_{n \rightarrow \infty} \mathbb{E} \left[ \left| \frac{\ell(n)}{n} M_{n}(t) \right|^{2} \right] = 0.
\end{eqnarray}

On the other hand, since (\ref{cond}) holds for $k = 1,2$, Proposition \ref{cor2} 
and dominated convergence entail
\begin{eqnarray*}
 \lim_{n \rightarrow \infty}  \frac{\ell(n)}{n}\int_{0}^{t} \ell(n)^{-1}(X_{n}(s)-1) {\rm d}s =  \int_{0}^{t} \lambda(s) {\rm d}s 
 \hspace*{5mm} \text{in probability}.
\end{eqnarray*}

\noindent Hence from (\ref{ec5}) we have
that
\begin{eqnarray*}
 \lim_{n \rightarrow \infty}  \frac{\ell(n)}{n}R_{n}(t) =   \Lambda(t)  \hspace*{5mm} \text{in probability},
\end{eqnarray*}

\noindent and since $t \rightarrow R_{n}(t)$ increases, by the diagonal procedure as 
in the proof of Proposition \ref{cor2}, our claim follows. 
\end{proof}

We continue our analysis of the destruction process, and prepare the ground for 
the main result of this section, which is the estimation of the number 
of steps in the algorithm for the isolating the root which are needed to
disconnect (and not necessarily isolate) a vertex chosen uniformly at random 
from the root component. We start by studying the analogous quantity in continuous time. For each fixed $n \in \mathbb{N}$, we denote by $u_{1}, u_{2}, \dots$ a sequence of
i.i.d. vertices in $[n] = \{1, \dots, n \}$ with the uniform distribution. Next, for every $i \in \mathbb{N}$,
we write $\Gamma_{i}^{(n)}$ the first instant when the vertex $u_{i}$ is disconnected from the root component. 
We shall establish the following limit theorem in law.

\begin{proposition} \label{pro3}
 Suppose that (\ref{cond}) holds for $k = 1,2$. Then 
 as $n \rightarrow \infty$, the random vector
 \begin{eqnarray*} 
  (\Gamma_{i}^{(n)}: i \geq 1) \Rightarrow (\gamma_{i} : i \geq 1)
 \end{eqnarray*}
in the sense of finite-dimensional distribution, where $\gamma_{1}, \gamma_{2}, \dots$ 
are i.i.d. random variables in $\mathbb{R}_{+}$ with distribution given by 
$\mathbb{P}(\gamma_{1} > t) = \lambda(t)$ for $t \geq 0$. 
\end{proposition}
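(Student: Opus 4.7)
The strategy is to compute the multivariate tail $\mathbb{P}(\Gamma_i^{(n)} > s_i,\, i \le k)$ by conditioning first on $T_n$ and the random vertices $u_1, \ldots, u_k$, and then to pass to the limit via the two hypotheses $(H_1)$ and $(H_2)$. Conditional on this data, $\Gamma_i^{(n)}$ is the minimum of the $d_n(1, u_i)$ independent $\operatorname{Exp}(1/\ell(n))$ clocks attached to the edges of the path $P_i$ from $u_i$ to the root, so $\Gamma_i^{(n)}$ is conditionally exponential with parameter $D_i := d_n(1, u_i)/\ell(n)$. In particular, the marginal tail $\mathbb{P}(\Gamma_i^{(n)} > t) = \mathbb{E}[e^{-tD_i}]$ tends to $\lambda(t)$ by $(H_1)$ and bounded convergence. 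For general $k$, the joint probability factorizes as a product over $e \in \bigcup_i P_i$ of $\exp(-\max_{i:e\in P_i} s_i / \ell(n))$, which, after bookkeeping over the multiplicities $m(e) := |\{i : e \in P_i\}|$, can be rewritten as
\begin{equation*}
\mathbb{P}\bigl(\Gamma_i^{(n)} > s_i,\, i \le k \,\bigm|\, T_n, u_1, \ldots, u_k\bigr) = \exp\Bigl(-\sum_{i=1}^k s_i D_i + S_n^{(k)}\Bigr),
\end{equation*}
where, using $|P_i \cap P_j| = d_n(1, u_i \wedge u_j)$ and the elementary bound $m - 1 \le \binom{m}{2}$, the correction $S_n^{(k)}$ lies in $\bigl[0,\, (s_1 + \cdots + s_k)\sum_{i<j} D_{ij}\bigr]$ with $D_{ij} := d_n(1, u_i \wedge u_j)/\ell(n)$.

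Everything then reduces to two ingredients: (i) joint convergence $(D_1, \ldots, D_k) \Rightarrow (\zeta_1, \ldots, \zeta_k)$ with i.i.d. coordinates, and (ii) $D_{ij} \to 0$ in probability for each pair $i<j$. Both are joint statements, while $(H_1)$ and $(H_2)$ control only the one-dimensional laws of $L_{1,n}$ and $L_{2,n}$; I would handle this via a variance-reduction argument. Setting $G_n^s(T_n) := \mathbb{E}[e^{-sD_1} \mid T_n]$, the conditional independence of $u_1, \ldots, u_k$ given $T_n$ yields
\begin{equation*}
\mathbb{E}\bigl[e^{-\sum_i s_i D_i}\bigr] = \mathbb{E}\Bigl[\prod_{i=1}^k G_n^{s_i}(T_n)\Bigr].
\end{equation*}
Now $(H_1)$ gives $\mathbb{E}[G_n^s(T_n)] \to \lambda(s)$, while the sandwich
\begin{equation*}
\lambda(s)^2 \le \mathbb{E}\bigl[G_n^s(T_n)^2\bigr] = \mathbb{E}\bigl[e^{-s(D_1 + D_2)}\bigr] \le \mathbb{E}\bigl[e^{-sL_{2,n}/\ell(n)}\bigr] \longrightarrow \lambda(s)^2,
\end{equation*}
in which Jensen gives the lower bound, the trivial inequality $L_{2,n} \le d_n(1,u_1)+d_n(1,u_2)$ gives the upper bound, and the limit uses $(H_2)$, forces $G_n^s(T_n) \to \lambda(s)$ in $L^2$ and hence in probability; bounded convergence then proves (i). For (ii), the identity $L_{2,n}/\ell(n) + D_{12} = D_1 + D_2$ combined with the preceding asymptotics yields $\mathbb{E}[e^{-s(D_1+D_2)}(e^{sD_{12}} - 1)] \to 0$ for every $s > 0$, and a truncation on $\{D_1 + D_2 \le M\}$ followed by the tightness of $(D_1, D_2)$ produces $D_{12} \to 0$ in probability.

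Combining (i), (ii) and the bound on $S_n^{(k)}$, dominated convergence (the conditional probability lies in $[0,1]$) finally yields
\begin{equation*}
\mathbb{P}\bigl(\Gamma_i^{(n)} > s_i,\, i \le k\bigr) \longrightarrow \mathbb{E}\Bigl[e^{-\sum_i s_i \zeta_i}\Bigr] = \prod_{i=1}^k \lambda(s_i),
\end{equation*}
which is precisely the joint tail of i.i.d. copies of $\gamma_1$, giving the claimed finite-dimensional convergence. The hardest step is the upgrade from the one-dimensional hypotheses $(H_1), (H_2)$ to the joint behaviour of several path lengths and of the last common ancestors; the variance-reduction via Jensen and the elementary inequality $L_{2,n} \le d_n(1, u_1) + d_n(1, u_2)$ is the decisive tool there.
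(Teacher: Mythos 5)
Your proof is correct, but it takes a genuinely different route from the paper's. The paper conditions on the percolation configuration: since $u_1,\dots,u_j$ are i.i.d.\ uniform and independent of the destruction process, $\mathbb{P}(\Gamma_1^{(n)}>t_1,\dots,\Gamma_j^{(n)}>t_j)=\mathbb{E}\bigl[n^{-j}\prod_i X_n(t_i)\bigr]$, and the conclusion then falls out of Proposition \ref{cor2} (the law of large numbers for the root-cluster size, itself imported from \cite{Be4}) together with bounded convergence — about five lines in total. You condition the other way, on $(T_n,u_1,\dots,u_k)$, write the joint tail as a product of exponential clock probabilities over the union of the root-to-$u_i$ paths, and then do the second-moment work yourself: the sandwich $\lambda(s)^2\le\mathbb{E}[G_n^s(T_n)^2]=\mathbb{E}[e^{-s(D_1+D_2)}]\le\mathbb{E}[e^{-sL_{2,n}/\ell(n)}]\to\lambda(s)^2$ is exactly the mechanism hiding inside Corollary 1 of \cite{Be4} that the paper leans on, so in effect you have reproved the relevant part of Proposition \ref{cor2} inline rather than invoking it. What your route buys is self-containedness and an explicit quantitative handle on the path overlaps: the identity $D_{12}=D_1+D_2-L_{2,n}/\ell(n)$ and the bound $0\le S_n^{(k)}\le(\sum_i s_i)\sum_{i<j}D_{ij}$ make visible that the only obstruction to exact conditional independence of the $\Gamma_i^{(n)}$ is the smallness of the common ancestors, i.e.\ precisely the content of Remark \ref{rem1}. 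What it costs is length and the overlap bookkeeping, which the paper's conditioning sidesteps entirely because, given the percolation configuration, the events $\{u_i\in T_n^{(1)}(t_i)\}$ are exactly (not just asymptotically) independent. All the individual steps you use — the reduction $L_{2,n}\le d_n(1,u_1)+d_n(1,u_2)$, the Jensen lower bound, the truncation-plus-tightness argument for $D_{12}\to 0$, and the final bounded-convergence step using that the conditional probability lies in $[0,1]$ — check out.
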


\begin{proof}
 We observe that for every $j \in \mathbb{N}$ and 
$t_{1}, \dots, t_{j} \geq 0$, there is the identity
\begin{eqnarray*}
 \mathbb{P}(\Gamma_{1}^{(n)} > t_{1}, \dots, \Gamma_{j}^{(n)} > t_{j}) = \mathbb{P}(u_{1} \in T_{n}^{(1)}(t_{1}), \dots, u_{j} \in T_{n}^{(1)}(t_{j})), 
\end{eqnarray*}

\noindent where $T_{n}^{(1)}(t)$ denotes the subtree at time $t$ which contains the root $1$. Recall that $u_{1}, \dots, u_{j}$ are i.i.d. uniformly 
distributed vertices, which are independent of the destruction process. 
On the other hand, for $t \geq 0$ the variable $n^{-1}X_{n}(t)$ is the 
proportion of vertices in the root component at time $t$, and represents the 
conditional probability that a vertex of $T_{n}$ chosen uniformly at random 
belongs to the root component at time $t$. We thus have
\begin{eqnarray*}
 \mathbb{P}(\Gamma_{1}^{(n)} > t_{1}, \dots, \Gamma_{j}^{(n)} > t_{j}) = \mathbb{E} \left[  n^{-j} \prod_{i=1}^{j} X_{n}(t_{i}) \right]. 
\end{eqnarray*}

\noindent Since (\ref{cond}) holds for $k = 1,2$,
we conclude from Proposition \ref{cor2} that
\begin{eqnarray*}
 \lim_{n \rightarrow \infty} \mathbb{P}(\Gamma_{1}^{(n)} > t_{1}, \dots, \Gamma_{j}^{(n)} > t_{j}) = \prod_{i=1}^{j}  \lambda(t_{i}), 
\end{eqnarray*}

\noindent which establishes our claim. 
\end{proof}

We are now in position to state the main result of this section. We provide a non-trivial 
limit in distribution for the number $Y_{i}^{(n)}$ of cuts (in the algorithm for 
isolating the root) which are needed to disconnect a vertex chosen uniformly at random, 
say $u_{i}$, from the root component.

\begin{corollary} \label{cor3}
 Suppose that (\ref{cond}) holds for $k = 1,2$, with $\ell$
 such that $\ell(n) = o(\sqrt{n})$. 
 Then as $n \rightarrow \infty$, we have that
 \begin{eqnarray*} 
  \left( \frac{\ell(n)}{n} Y_{i}^{(n)}: i \geq 1 \right) \Rightarrow (Y_{i} : i \geq 1)
 \end{eqnarray*}
\noindent in the sense of finite-dimensional distribution, where $Y_{1}, Y_{2}, \dots$ 
are i.i.d. random variables on $[0,a)$ where $a = \Lambda(\infty)$, and with distribution 
given by 
\begin{eqnarray} \label{ec1}
 \mathbb{E}[f(Y_{1})]= - \int_{0}^{a} f(x) {\rm d} \lambda \circ \Lambda^{-1}(x), 
\end{eqnarray}

\noindent where $f$ is a generic positive measurable function.
\end{corollary}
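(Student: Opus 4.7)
My plan is to identify $Y_i^{(n)}$ with $R_n(\Gamma_i^{(n)})$ using the memoryless property of the exponential clocks, and then combine the uniform in-probability convergence of $\ell(n)n^{-1} R_n$ from Lemma \ref{lema1} with the distributional convergence of the disconnection times from Proposition \ref{pro3}. For the first point, I would observe that in the continuous-time procedure, conditionally on the current root component carrying a set $E$ of edges, the next root-component edge to be cut is uniform on $E$; hence the sequence of root-component removals coincides in law with the discrete Meir--Moon root-isolation algorithm. In particular, the number $Y_i^{(n)}$ of discrete cuts needed to disconnect $u_i$ equals $R_n(\Gamma_i^{(n)})$, the number of root-cluster edges removed by continuous time $\Gamma_i^{(n)}$.

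Next, for any fixed $j \geq 1$, Proposition \ref{pro3} yields the joint convergence $(\Gamma_i^{(n)})_{i \le j} \Rightarrow (\gamma_i)_{i \le j}$ with the $\gamma_i$ i.i.d.\ of tail $\lambda$; the continuous mapping theorem applied to $\Lambda$ then gives $(\Lambda(\Gamma_i^{(n)}))_{i\le j} \Rightarrow (\Lambda(\gamma_i))_{i \le j}$. Since the $\gamma_i$ are almost surely finite, the family $\max_{i \le j} \Gamma_i^{(n)}$ is tight, so for any $\varepsilon > 0$ I can choose $T$ with $\mathbb{P}(\max_{i \le j}\Gamma_i^{(n)} > T) < \varepsilon$ for all large $n$. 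On the complementary event,
\[
\max_{i \le j} \left| \frac{\ell(n)}{n} R_n(\Gamma_i^{(n)}) - \Lambda(\Gamma_i^{(n)}) \right| \le \sup_{0 \le s \le T} \left| \frac{\ell(n)}{n} R_n(s) - \Lambda(s) \right|,
\]
which tends to $0$ in probability by Lemma \ref{lema1}. Slutsky's lemma then delivers the desired joint convergence $(\ell(n)n^{-1} Y_i^{(n)})_{i \le j} \Rightarrow (\Lambda(\gamma_i))_{i \le j}$, so one sets $Y_i := \Lambda(\gamma_i)$, which are i.i.d.\ and supported in $[0,a)$.

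Finally, I would compute the law of $Y_1$: since $\mathbb{P}(\gamma_1 \in \mathrm{d}t) = -\mathrm{d}\lambda(t)$ and the map $t \mapsto \Lambda(t)$ is a continuous bijection from $[0,\infty)$ onto $[0,a)$, the substitution $x = \Lambda(t)$ gives, for any nonnegative measurable $f$,
\[
\mathbb{E}[f(Y_1)] = -\int_0^{\infty} f(\Lambda(t))\, \mathrm{d}\lambda(t) = -\int_0^{a} f(x)\, \mathrm{d}(\lambda \circ \Lambda^{-1})(x),
\]
which is exactly (\ref{ec1}). The main subtlety will be the tightness step needed to transfer the uniform-on-compacts convergence of Lemma \ref{lema1} to the random times $\Gamma_i^{(n)}$; the identification $Y_i^{(n)} = R_n(\Gamma_i^{(n)})$ and the change-of-variable computation are routine once that hurdle is cleared.
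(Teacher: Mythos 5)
Your proposal is correct and follows essentially the same route as the paper: the identity $Y_i^{(n)} = R_n(\Gamma_i^{(n)})$, the combination of Lemma \ref{lema1} with Proposition \ref{pro3} (the paper leaves the tightness/localization step implicit, which you rightly spell out), and the change of variables $x=\Lambda(t)$ to identify the limit law. No gaps.
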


\begin{proof} 
Recall that $R_{n}(t)$ denotes the number of edges of the root component which have been
removed up to time $t$ in the continuous procedure described above. We recall also 
that $\Gamma_{i}^{(n)}$ denotes the first instant when the vertex 
$u_{i}$, chosen uniformly at random, has been disconnected from the root component. 
Hence we have the following identity,
\begin{eqnarray*}
 Y_{i}^{(n)} = R_{n}(\Gamma_{i}^{(n)})  \hspace*{5mm} \text{for} \hspace*{2mm} i \in \mathbb{N}.
\end{eqnarray*}

\noindent It follows from Lemma \ref{lema1} and Proposition \ref{pro3} that
\begin{eqnarray*}
 \lim_{n \rightarrow \infty} \left( \frac{\ell(n)}{n} R_{n}(\Gamma_{i}^{(n)}) - \Lambda(\Gamma_{i}^{(n)}) \right) = 0 \hspace*{6mm} \text{in probability}, 
\end{eqnarray*}

\noindent and therefore, as $n \rightarrow \infty$, we have that
 \begin{eqnarray*} 
  \left( \frac{\ell(n)}{n} Y_{i}^{(n)}: i \geq 1 \right) \Rightarrow \left( \Lambda(\gamma_{i}) : i \geq 1 \right)
 \end{eqnarray*}

 \noindent in the sense of finite-dimensional distribution, where 
 $\gamma_{1}, \gamma_{2}, \dots$ are i.i.d. random variables in $\mathbb{R}_{+}$ with
 distribution given by $\mathbb{P}(\gamma_{1} > t ) = \lambda(t)$. Finally, we only need
 to verify that the law of $\Lambda(\gamma_{1})$ is given
 by (\ref{ec1}). We observe that by dominated convergence $\lambda$ is differentiable,
 and we denote by $\lambda^{\prime}$ its derivative. Then for $f$ a generic positive measurable
 function that
 \begin{eqnarray*}
  \mathbb{E} \left[ f \left( \Lambda(\gamma_{1}) \right)\right] & = & -\int_{0}^{\infty} f \left( \Lambda( x) \right) \lambda^{\prime}(x)  {\rm d} x. 
 \end{eqnarray*}
 
 On the other hand, we observe that $\Lambda$ is an increasing continuous and
 differentiable function whose derivative is never $0$. Hence
  \begin{eqnarray*}
  \mathbb{E} \left[ f \left( \Lambda(\gamma_{1}) \right)\right] & = & -\int_{0}^{\Lambda(\infty)} f \left( x \right) \frac{\lambda^{\prime} \circ \Lambda^{-1}(x)}{\lambda \circ \Lambda^{-1}(x)} {\rm d} x \\
  & = & -\int_{0}^{\Lambda(\infty)} f \left( x \right)  {\rm d} \lambda \circ \Lambda^{-1}(x), 
 \end{eqnarray*}
 
 \noindent which completes the proof.
 \end{proof}

Corollary \ref{cor3} will have a crucial role in the proof of 
Theorem \ref{teo1}. This result will enable us to get a precise estimate of 
distances in the cut-tree.  \\

Finally, let $N^{(u)}(n)$ be the number of remaining cuts that is needed to isolate a 
vertex chosen uniformly at random, say $u$, once it has been disconnected from the root component.
The next proposition establishes a criterion which ensures that $N^{(u)}(n)$ is small compared to 
$n /\ell(n)$ with high probability. This technical ingredient will be useful later on in the proof of 
Theorem \ref{teo1}.

\begin{proposition} \label{cor5}
 Assume that (\ref{cond3}) and (\ref{cond2}) hold with $\ell$ such that $\ell(n) = o(\sqrt{n})$. Then we have
 \begin{eqnarray*}
  \lim_{n \rightarrow \infty} \frac{\ell(n)}{n} N^{(u)}(n) = 0 \hspace*{5mm} \text{in probability.}
 \end{eqnarray*}
\end{proposition}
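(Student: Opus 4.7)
My plan is to compute $\mathbb{E}[N^{(u)}(n)]$ (averaged over the uniform choice of $u$) to leading order via a Meir--Moon type identity, and then conclude by Markov's inequality. The crude upper bound $N^{(u)}(n) \leq |T_n^{(u)}| - 1$ by the size of $u$'s cluster at $\Gamma^{(u)}$ does not suffice: under (\ref{cond3}) alone it only yields $\ell(n)\mathbb{E}[|T_n^{(u)}|]/n = O(1)$. One must exploit the additional factor $1/d_n(v,u) \asymp 1/\ell(n)$ produced by the Meir--Moon formula together with the $L^1$-control furnished by (\ref{cond2}).

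First, I would invoke the strong Markov property of the continuous-time destruction of Section \ref{sec2}: conditional on $T_n^{(u)}$, the residual exponential clocks on the edges of $T_n^{(u)}$ are i.i.d. Exp$(1/\ell(n))$ by memorylessness, so the destruction of $T_n^{(u)}$ starts afresh. The Meir--Moon formula applied to $T_n^{(u)}$ rooted at $u$ then yields
\begin{eqnarray*}
\mathbb{E}\bigl[N^{(u)}(n) \,\big|\, T_n^{(u)}\bigr] \;=\; \sum_{v \in T_n^{(u)},\; v \neq u} \frac{1}{d_n(v,u)}.
\end{eqnarray*}
Next, for a fixed $v \neq u$, I would compute $\mathbb{P}(v \in T_n^{(u)})$ explicitly. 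Writing $z = u \wedge v$ for the last common ancestor in $T_n$, the edge-sets path$(1,z)$, path$(z,u)$ and path$(v,z)$ are pairwise disjoint, and hence the corresponding minima of the attached exponential clocks are independent exponential random variables with parameters $d_n(1,z)/\ell(n)$, $d_n(z,u)/\ell(n)$ and $d_n(v,z)/\ell(n)$, respectively. Observing that $v$ is connected to $u$ at time $\Gamma^{(u)}$ iff the minimum over path$(1,z)$ is the smallest of these three, one obtains
\begin{eqnarray*}
\mathbb{P}(v \in T_n^{(u)}) \;=\; \frac{d_n(1, u\wedge v)}{d_n(1, u\wedge v) + d_n(u, v)}.
\end{eqnarray*}
Setting $h = d_n(1, u\wedge v)$ and $D = d_n(u,v)$ and taking expectations over an independent uniform pair $(u,v) \in [n]^2$, the two displays combine to
\begin{eqnarray*}
\frac{1}{n}\,\mathbb{E}\bigl[N^{(u)}(n)\bigr] \;=\; \mathbb{E}\!\left[\frac{h}{D(D+h)}\,\mathbf{1}_{\{u \neq v\}}\right].
\end{eqnarray*}

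It remains to prove that $\ell(n)$ times the right-hand side tends to $0$. For small $\varepsilon,\delta > 0$, I split the expectation along the event $A = \{h \leq \varepsilon \ell(n)\} \cap \{D \geq \delta \ell(n)\}$. On $A$ one has the deterministic bound $\ell(n)\,h/(D(D+h)) \leq \varepsilon/\delta^2$. On the complement, I use $h/(D(D+h)) \leq 1/D$ together with the fact that, by (\ref{cond2}) combined with the convergence $D/\ell(n) \Rightarrow \zeta_1+\zeta_2$ from (\ref{cond3}), the family $\ell(n)/D\,\mathbf{1}_{\{u\neq v\}}$ is uniformly integrable with limit $1/(\zeta_1+\zeta_2)$. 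Since $\mathbb{P}(h > \varepsilon\ell(n)) \to 0$ by (\ref{cond3}), the quantity $\mathbb{E}[\ell(n)/D\,\mathbf{1}_{A^c}]$ converges as $n \to \infty$ to $\mathbb{E}[1/(\zeta_1+\zeta_2)\,\mathbf{1}_{\{\zeta_1+\zeta_2<\delta\}}]$, which in turn vanishes as $\delta \to 0$ because $\mathbb{E}[1/(\zeta_1+\zeta_2)]<\infty$. Letting first $n \to \infty$ and then $\varepsilon,\delta \to 0$ yields $\ell(n)\mathbb{E}[N^{(u)}(n)]/n \to 0$, and the claim follows by Markov's inequality. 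The main technical step is precisely this uniform-integrability argument, and the $L^1$ control on $\ell(n)/d_n(u,v)$ provided by (\ref{cond2}) is essential.
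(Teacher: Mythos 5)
Your argument is correct, but it follows a genuinely different route from the paper's. The paper treats $N^{(u)}(n)$ as the terminal value of the counting process $R_n^{(u)}(\Gamma_n+\cdot)-R_n^{(u)}(\Gamma_n)$, compensates it by $\int_0^\infty \ell(n)^{-1}(X_n^{(u)}(\Gamma_n+s)-1)\,{\rm d}s$, bounds the martingale part in $L^2$ via its bracket $[M_n^{(u)}]_\infty\le n-1$ (this is where $\ell(n)=o(\sqrt n)$ enters), and then computes the expectation of the compensator by integrating cluster-membership probabilities, arriving at $\mathbb{E}\bigl[(\ell(n)/d_n(u,v)-\ell(n)/L_{2,n})\mathbf{1}_{\{u\neq v\}}\bigr]$, i.e.\ (\ref{ec15}). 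You instead apply the strong Markov property at $\Gamma_n$ and the Meir--Moon record identity to $T_n^{(u)}$ rooted at $u$, and compute $\mathbb{P}(v\in T_n^{(u)})=h/(h+D)$ as a competition between independent exponential minima over the disjoint edge sets ${\rm path}(1,u\wedge v)$ and ${\rm path}(u,v)$. Since $L_{2,n}=D+h$, your identity $n^{-1}\mathbb{E}[N^{(u)}(n)]=\mathbb{E}[h\,\mathbf{1}_{\{u\neq v\}}/(D(D+h))]$ is exactly (\ref{ec15}) rewritten, and your concluding step (uniform integrability of $\ell(n)/d_n(u,v)\,\mathbf{1}_{\{u\neq v\}}$ from ($H^{\prime}$) plus the weak convergence in ($H$), together with $\mathbb{P}(h>\varepsilon\ell(n))\to 0$ from Remark \ref{rem1}) is the same mechanism the paper invokes to show the two terms of (\ref{ec15}) have the same limit; the minor caveat that $\delta$ should be taken a continuity point of the law of $\zeta_1+\zeta_2$, or that one only needs a $\limsup$ bound there, is cosmetic. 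What your route buys is twofold: it gives the exact first moment rather than an $L^2$ approximation of it, and, since $N^{(u)}(n)\ge 0$ so Markov's inequality suffices, it proves Proposition \ref{cor5} without using the hypothesis $\ell(n)=o(\sqrt n)$ at all — that assumption is only an artifact of the paper's martingale bound. What it costs is the need to justify the strong Markov/memorylessness step and the Meir--Moon identity for the residual tree, both of which you state correctly.
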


\begin{proof}
 We write $R_{n}^{(u)}(t)$ for the number of edges that have been removed up to time $t$
from the tree component containing the vertex $u$, and $\Gamma_{n}$ the first instant when the vertex $u$ has been 
disconnected from the root cluster; in particular,
\begin{eqnarray*}
 \lim_{t \rightarrow \infty} R_{n}^{(u)}(\Gamma_{n} + t) - R_{n}^{(u)}(\Gamma_{n}) = N^{(u)}(n).
\end{eqnarray*}

\noindent Let $X_{n}^{(u)}(t)$ be the size of the subtree containing the vertex $u$
at time $t$. Since each edge is removed with rate $\ell(n)^{-1}$, independently
of the other edges, the process
\begin{eqnarray*}
 M_{n}^{(u)}(t) =  R_{n}^{(u)}(\Gamma_{n} + t) - R_{n}^{(u)}(\Gamma_{n}) - \int_{0}^{t} \ell(n)^{-1} (X_{n}^{(u)}(\Gamma_{n} + s) -1) {\rm d}s, \hspace*{5mm} t \geq 0,
\end{eqnarray*}

\noindent is a purely discontinuous martingale with terminal value
\begin{eqnarray*}
 \lim_{t \rightarrow \infty} M_{n}^{(u)}(t) =  N^{(u)}(n) - \int_{0}^{\infty} \ell(n)^{-1} (X_{n}^{(u)}(\Gamma_{n} + s) -1) {\rm d}s.
\end{eqnarray*}

\noindent Further, its bracket can be bounded by $[M_{n}^{(u)}]_{t} \leq n-1$. Then since we
assume that $\ell(n) = o(\sqrt{n})$,
\begin{eqnarray*}
 \lim_{n \rightarrow \infty} \mathbb{E} \left[ \left| \frac{\ell(n)}{n} N^{(u)}(n) - \int_{0}^{\infty} n^{-1} (X_{n}^{(u)}(\Gamma_{n} + s) -1) {\rm d}s \right|^{2} \right] = 0.
\end{eqnarray*}

\noindent Therefore, it only remains to prove that 
\begin{eqnarray} \label{ec10}
 \lim_{n \rightarrow \infty} \mathbb{E} \left[ \int_{0}^{\infty} n^{-1} (X_{n}^{(u)}(\Gamma_{n} +s) -1) {\rm d}s \right]=0.
\end{eqnarray}

\noindent Let $T^{(u)}_{n}(s)$ denote the subtree at time $s$ which contains the vertex $u$. We observe that 
\begin{eqnarray*}
 \mathbb{E} \left[ \int_{0}^{\infty} n^{-1} (X_{n}^{(u)}(\Gamma_{n}+s) -1) {\rm d}s \right] & = & \mathbb{E} \left[ \int_{0}^{\infty} n^{-1} (X_{n}^{(u)}(s) -1) \mathbf{1}_{\{\Gamma_{n} \leq s \}} {\rm d}s \right] \\
 & = &  \mathbb{E} \left[ \int_{0}^{\infty} n^{-1} (X_{n}^{(u)}(s) -1) \mathbf{1}_{ \left\{1 \notin T^{(u)}_{n}(s) \right\}} {\rm d}s \right].
\end{eqnarray*}

\noindent We note that a vertex $v$ chosen uniformly at random in $[n]$ and independent of $u$ 
belong to the same cluster at time $t$ if and only if no edge on the path form $u$ and $v$ has been
removed at time $t$. Recall that the probability that a given edge has not yet been removed at time
$t$ is $\exp(-t/\ell(n))$ in the continuous time destruction process. Recall that $d_{n}$ denotes the graph distance in $T_{n}$, 
and $u \wedge v$ the last common ancestor of $u$ and $v$. Then, we have that
\begin{eqnarray*}
\mathbb{E} \left[ n^{-1}(X_{n}^{(u)}(t)-1) \mathbf{1}_{\left \{1 \notin T^{(u)}(t) \right\}} \right] & = & n^{-1} \mathbb{E} \left[ \sum_{i \in [n] \setminus u} \mathbf{1}_{ \left\{i \in T^{(u)}_{n}(t), \,  1 \notin T^{(u)}_{n}(t) \right\}} \right] \\
& = &  \mathbb{E} \left[ \left( e^{-\frac{d_{n}(u,v)}{\ell(n)} t} - e^{-\frac{L_{2,n}}{\ell(n)} t} \right)\mathbf{1}_{\{v \neq u\}}\right],
\end{eqnarray*}

\noindent where $L_{2,n}$ is 
the length of the tree $T_{n}$ reduced to the vertex $u,v$ and its root. Then,
\begin{eqnarray} \label{ec15}
 \mathbb{E} \left[ \int_{0}^{\infty} n^{-1} (X_{n}^{(u)}(\Gamma_{n}+s) -1) {\rm d}s \right] & = & \mathbb{E} \left[ \left(\frac{\ell(n)}{d_{n}(u,v)} - \frac{\ell(n)}{L_{2,n}} \right) \mathbf{1}_{\{v \neq u\}}\right].
 \end{eqnarray}

\noindent On the other hand, since
\begin{eqnarray*}
  \frac{\ell(n)}{L_{2,n}} \mathbf{1}_{\{ v \neq u \}} \leq \frac{\ell(n)}{d_{n}(u,v)} \mathbf{1}_{\{ v \neq u \}}, 
\end{eqnarray*}

\noindent it is not difficult to see from Remark \ref{rem1} that the assumption (\ref{cond2}) implies that
\begin{equation*} 
 \lim_{n \rightarrow \infty} \mathbb{E} \left[ \frac{\ell(n)}{L_{2,n}} \mathbf{1}_{\{ v \neq u \}} \right] = \mathbb{E}\left[\frac{1}{\zeta_{1} + \zeta_{2}}  \right]< \infty.
 \end{equation*}

\noindent Therefore, we get (\ref{ec10}) by letting $n \rightarrow \infty$ in (\ref{ec15}).
\end{proof}

\section{Proof of Theorem \ref{teo1}} \label{sec3}

In this section, we prove our main result, Theorem \ref{teo1}. We stress that during 
the proof we consider that the tree $T_{n}$ is a deterministic tree. This will clearly 
imply the result for random trees. In this direction,
we recall that we view the $\text{Cut}(T_{n})$ as the pointed metric measure space 
$([n]^{0}, \delta_{n}, 0, \mu_{n})$, where $0$ corresponds to the root and
$1, \dots, n$ to the leaves, $\delta_{n}$ the
graph distance induced by the cut-tree, and $\mu_{n}$ the uniform probability measure on 
$[n]$ with $\mu_{n}(0) = 0$. We assume that $a = \Lambda(\infty) < \infty$. 
We then recall that $I_{\mu}$ denotes the pointed measure metric space
given by the interval $[0, a]$, pointed at $0$, equipped with the 
Euclidean distance, and the probability measure $\mu$ given in (\ref{ec2}), i.e.
\begin{eqnarray*} 
 \int_{0}^{a} f(x) \mu({\rm d} x) = - \int_{0}^{a} f(x) \, {\rm d} \lambda \circ \Lambda^{-1}(x),
\end{eqnarray*}

\noindent where $f$ is a generic positive measurable function. We stress 
that in the case $a = \infty$ the proof follows along the same lines as that of 
$a < \infty$. Then, $I_{\mu}$ denotes the pointed measure metric space
given by the interval $[0, \infty)$, pointed at $0$, equipped with the Euclidean 
distance and the measure $\mu$. \\

We recall that to establish weak convergence in the sense induced by the Gromov-Prokhorov
topology, we shall prove the convergence in distribution of the rescaled distances of 
$\text{Cut}(T_{n})$. Specifically, for every $n \in \mathbb{N}$, set $\xi_{n}(0)=0$ and 
consider a sequence $(\xi_{n}(i))_{i \geq 1}$ of i.i.d. random variables with law 
$\mu_{n}$. We will prove that
\begin{eqnarray*}
\left( \frac{\ell(n)}{n} \delta_{n}(\xi_{n}(i), \xi_{n}(j)): i,j \geq 0 \right) \Rightarrow (\delta(\xi(i), \xi(j)): i,j \geq 0) 
\end{eqnarray*}

\noindent in the sense of finite-dimensional distribution, where $\xi(0) = 0$ and
$(\xi(i))_{i \geq 1}$ is a sequence of i.i.d. random variables on $\mathbb{R}_{+}$ with law $\mu$.
Furthermore, $\delta(\xi(i), \xi(j)) = |\xi(i) - \xi(j)|$ since $\delta$ is the
Euclidean distance, and in particular,  $\delta(0, \xi(i)) = \xi(i)$.\\

The key idea of the proof relies in the relationship between the distance 
in $\text{Cut}(T_{n})$, and the number of cuts needed to disconnect certain 
number of vertices in $T_{n}$. Indeed, the height of the leaf $\{i\}$ in $\text{Cut}(T_{n})$
is precisely the number of cuts needed to isolate the vertex $i$ in $T_{n}$. Therefore, it 
will be convenient to think in $(\xi_{n}(i))_{i \geq 1}$ as a sequence of i.i.d. vertices
in $[n]$, with the uniform distribution. 

\begin{proof}[ Proof of Theorem \ref{teo1}] We observe that for $i \geq 1$,
\begin{eqnarray*}
 \delta_{n}(\xi_{n}(0) , \xi_{n}(i)) = \delta_{n}(0 , \xi_{n}(i))
\end{eqnarray*}

\noindent is precisely the number of cuts which are needed to isolate the vertex
$\xi_{n}(i)$. For each $n \in \mathbb{N}$,
we denote by $\delta_{n}^{(1)}(0, \xi_{n}(i))$ the number of cuts which are needed to disconnect 
the vertex $\xi_{n}(i)$ from the root component, and by $\eta(\xi_{n}(i))$ the remaining
number of cuts which are needed to isolate the vertex $\xi_{n}(i)$ after it has been disconnected. Clearly, 
we have
\begin{eqnarray*}
 \delta_{n}(0 , \xi_{n}(i)) - \delta_{n}^{(1)}(0 , \xi_{n}(i)) = \eta(\xi_{n}(i)).
\end{eqnarray*}

\noindent Since the condition (\ref{cond2}) holds, Proposition \ref{cor5} implies that $\lim_{n \rightarrow \infty} n^{-1}\ell(n) \eta(\xi_{n}(i)) = 0$
in probability for $i \geq 1$. Therefore, the assumption (\ref{cond3}) entails according to 
Corollary \ref{cor3} that
\begin{eqnarray*}
 \left( \frac{\ell(n)}{n} \delta_{n}(0 , \xi_{n}(i)): i \geq 0 \right) \Rightarrow \left( \xi(i): i \geq 0 \right)
\end{eqnarray*}

\noindent in the sense of finite-dimensional distribution. Essentially, we follow the same argument to show that the preceding also holds
jointly with
 \begin{eqnarray} \label{ec8}
 \left( \frac{\ell(n)}{n} \delta_{n}(\xi_{n}(i) , \xi_{n}(j)): i,j \geq 1 \right) \Rightarrow \left( \delta(\xi(i) , \xi(j)): i,j \geq 1 \right)
\end{eqnarray}

\noindent which is precisely our statement. \\

In this direction, for $i,j \geq 1$, we denote by $\delta_{n}^{(2)}(\xi_{n}(i) , \xi_{n}(j))$ the number
of cuts which are needed to isolate the vertices $\xi_{n}(i)$ and $\xi_{n}(j)$.
We also write $\delta_{n}^{(3)}(\xi_{n}(i) , \xi_{n}(j))$ for the number of cuts 
(in the algorithm for isolating the root) until for the first time, the vertices $\xi_{n}(i)$ and $\xi_{n}(j)$ 
are disconnected. Hence from the description of the cut-tree, 
it should be plain that
\begin{eqnarray} \label{ec9}
 \delta_{n}(\xi_{n}(i) , \xi_{n}(j)) = (\delta_{n}^{(2)}(\xi_{n}(i) , \xi_{n}(j))+1) - (\delta_{n}^{(3)}(\xi_{n}(i) , \xi_{n}(j))-1).
\end{eqnarray}

\noindent Next we observe that
\begin{eqnarray*}
 \delta_{n}^{(3)}(\xi_{n}(i) , \xi_{n}(j)) - \min(\delta_{n}^{(1)}(0 , \xi_{n}(i)), \delta_{n}^{(1)}(0 , \xi_{n}(j))) \leq \eta(\xi_{n}(i)) + \eta(\xi_{n}(j)),
\end{eqnarray*}

\noindent and 
\begin{eqnarray*}
 \delta_{n}^{(2)}(\xi_{n}(i) , \xi_{n}(j)) - \max(\delta_{n}^{(1)}(0 , \xi_{n}(i)), \delta_{n}^{(1)}(0 , \xi_{n}(j))) \leq \eta(\xi_{n}(i)) + \eta(\xi_{n}(j)).
\end{eqnarray*}

\noindent Since the assumption (\ref{cond3}) and (\ref{cond2}) hold, it follows from Proposition \ref{cor5} that
\begin{eqnarray*}
 \lim_{n \rightarrow \infty} \frac{\ell(n)}{n} \left( \eta(\xi_{n}(i)) + \eta(\xi_{n}(j)) \right) = 0 \hspace*{6mm} \text{in probability}.
\end{eqnarray*}

\noindent Moreover, Corollary \ref{cor3} implies that 
 \begin{eqnarray*}
 \left( \frac{\ell(n)}{n} \delta_{n}^{(3)}(\xi_{n}(i) , \xi_{n}(j)): i,j \geq 1 \right) \Rightarrow \left( \min(\xi(i) , \xi(j)): i,j \geq 1 \right),
\end{eqnarray*}

\noindent and
 \begin{eqnarray*} 
 \left( \frac{\ell(n)}{n} \delta_{n}^{(2)}(\xi_{n}(i) , \xi_{n}(j)): i,j \geq 1 \right) \Rightarrow \left( \max(\xi(i) , \xi(j)): i,j \geq 1 \right)
\end{eqnarray*}

\noindent hold jointly. Therefore, since $\delta$ is the Euclidean distance, 
the convergence in (\ref{ec8}) follows from the identity (\ref{ec9}).
\end{proof}

\section{Examples} \label{exa}

In this section, we present some examples of trees that fulfilled the conditions of Theorem
\ref{teo1}. But first, we observe that when the hypotheses of the 
latter are satisfied with $\zeta_{1} \equiv 1$,
the probability measure $\mu$ given in (\ref{ec2}) corresponds to the Lebesgue measure on the unit interval
$[0,1]$. The above follows from the fact that $\lambda(t) = e^{-t}$ for all $t \geq 0$. Then we have the following
interesting consequence of Theorem \ref{teo1}.

\begin{corollary} \label{cor4} 
  Suppose that (\ref{cond3}) and (\ref{cond2}) hold, with $\zeta_{1} \equiv 1$ 
  and $\ell$ such that $\ell(n) = o(\sqrt{n})$.
 Then as $n \rightarrow \infty$, we have the following convergence 
 in the sense of the pointed Gromov-Prokhorov topology:
 \begin{eqnarray*}
  \frac{\ell(n)}{n} \text{{\rm Cut}}(T_{n}) \Rightarrow I_{1}.
 \end{eqnarray*}
 \noindent where $I_{1}$ is the pointed measure metric space given by
 the unit interval $[0,1]$, pointed at $0$, equipped with the Euclidean 
 distance and the Lebesgue measure.
\end{corollary}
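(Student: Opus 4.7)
The plan is to derive this corollary as a direct specialization of Theorem \ref{teo1}: since both hypotheses (\ref{cond3}) and (\ref{cond2}) are assumed and $\ell(n) = o(\sqrt{n})$, the only task is to verify that, when $\zeta_{1} \equiv 1$, the limiting object $I_{\mu}$ produced by Theorem \ref{teo1} coincides with $I_{1}$ as described. So there is no genuine obstacle here; the work is a short computation of the Laplace transform, its antiderivative, and the induced measure.

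First I would compute $\lambda(t) = \mathbb{E}[e^{-t \zeta_{1}}] = e^{-t}$ for all $t \geq 0$, which in particular gives $a = \mathbb{E}[1/\zeta_{1}] = 1 < \infty$, so that the hypothesis $a < \infty$ of Theorem \ref{teo1} is satisfied and the limiting space lives on $[0,1]$. Next I would compute
\begin{eqnarray*}
 \Lambda(t) = \int_{0}^{t} e^{-s} \, \mathrm{d}s = 1 - e^{-t},
\end{eqnarray*}
so that $\Lambda^{-1}(x) = -\ln(1-x)$ for $x \in [0,1)$, and consequently $\lambda \circ \Lambda^{-1}(x) = e^{\ln(1-x)} = 1 - x$.

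Finally, plugging into the definition (\ref{ec2}) of $\mu$, for any positive measurable $f$ on $[0,1]$ one obtains
\begin{eqnarray*}
 \int_{0}^{1} f(x) \mu(\mathrm{d}x) = - \int_{0}^{1} f(x) \, \mathrm{d}(1-x) = \int_{0}^{1} f(x) \, \mathrm{d}x,
\end{eqnarray*}
so that $\mu$ is Lebesgue measure on $[0,1]$. Hence $I_{\mu} = I_{1}$, and Theorem \ref{teo1} yields the claimed Gromov-Prokhorov convergence $(\ell(n)/n)\,\text{Cut}(T_{n}) \Rightarrow I_{1}$.
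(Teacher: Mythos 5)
Your proposal is correct and follows exactly the paper's route: the corollary is obtained by specializing Theorem \ref{teo1} to $\zeta_{1}\equiv 1$, noting $\lambda(t)=e^{-t}$ and checking that the measure in (\ref{ec2}) is Lebesgue measure on $[0,1]$. Your version merely spells out the computation of $\Lambda$, $\Lambda^{-1}$ and $\lambda\circ\Lambda^{-1}$ that the paper leaves implicit.
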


A natural example is the class of random trees with logarithmic heights, i.e. which fulfill 
hypothesis (\ref{cond3}) with $\ell(n) = c \ln n$ for some $c >0$, such as binary search trees, 
regular trees, uniform random recursive trees, and more generally scale-free random trees. 
We are now going to prove that (\ref{cond2}) is also satisfied for the previous families 
of trees and therefore their rescaled cut-tree converges in the sense of Gromov-Prokhorov topology
to $I_{1}$.

\paragraph*{1. Binary search trees.} A popular family of random trees used in computer 
science for sorting and searching data is the binary search tree. More precisely, a 
binary search tree is a binary tree in which each vertex is associated to a key, 
where the keys are drawn randomly from an ordered set, we say $\{1, \dots, n\}$, until
the set is exhausted. The first key is associated to the root. The next key is 
placed at the left child of the root if it is smaller than the root's key and
placed to the right if it is larger. Then one proceeds progressively, inserting key by key.
When all the keys are placed one gets a binary tree with $n$ vertices. For further details, see
e.g. \cite{Drmota2}. Theorem S1 in Devroye \cite{D22} shows that
the hypothesis (\ref{cond3}) holds with $\ell(n) = 2 \ln n$. Hence in order
to be in the framework of Corollary \ref{cor4} all that we need 
is to check that this family of trees fulfills the hypothesis (\ref{cond2}), namely
\begin{eqnarray*} 
 \lim_{n \rightarrow \infty} \mathbb{E} \left[ \frac{2 \ln n}{d_{n}(u,v)} \mathbf{1}_{\{u \neq v \}} \right]= \frac{1}{2},
\end{eqnarray*}

\noindent where $u$ and $v$ are two vertices chosen uniformly at random with replacement 
from the binary search tree of size $n$. 
In this direction, we pick $0 < \varepsilon < (2 \ln 2)^{-1}$ and consider the function
$\phi_{\varepsilon}$ given by $\phi_{\varepsilon} = 0$ on $[0 , \varepsilon]$,
$\phi_{\varepsilon} =1$ on $[2 \varepsilon, \infty)$, and $\phi_{\varepsilon}$ linear on 
$[\varepsilon, 2\varepsilon]$. We observe that 
\begin{eqnarray*}
 \lim_{n \rightarrow \infty} \mathbb{E} \left[ \frac{ 2 \ln n}{d_{n}(u,v)} \phi_{\varepsilon} \left( \frac{d_{n}(u,v)}{ 2 \ln n} \right) \mathbf{1}_{\{u \neq v \}} \right] =  \frac{1}{2}\phi_{\varepsilon}\left(\frac{1}{2} \right). 
\end{eqnarray*}

\noindent Further, we note that $\phi_{\varepsilon}(1/2) \rightarrow 1$ 
as $\varepsilon \rightarrow 0$. Then, it is enough to prove that 
\begin{eqnarray} \label{ec7}
\lim_{\varepsilon \rightarrow 0} \limsup_{n \rightarrow \infty}\mathbb{E} \left[ \left( \frac{2 \ln n}{d_{n}(u,v)} - \frac{ 2 \ln n}{d_{n}(u,v)} \phi_{\varepsilon} \left( \frac{d_{n}(u,v)}{ 2 \ln n} \right) \right) \mathbf{1}_{\{u \neq v \}} \right] = 0,
\end{eqnarray}

\noindent in order to show (\ref{cond2}). We write $X^{i}(n,k)$ for the number of
vertices at distance $k \geq 1$ from the vertex $i$ in a binary search tree of size $n$.
Then 
\begin{eqnarray*}
 \mathbb{E} \left[ \left( \frac{2 \ln n}{d_{n}(u,v)} - \frac{ 2 \ln n}{d_{n}(u,v)} \phi_{\varepsilon} \left( \frac{d_{n}(u,v)}{ 2 \ln n} \right) \right) \mathbf{1}_{\{u \neq v \}} \right] & \leq & \mathbb{E} \left[ \frac{ 2 \ln n}{d_{n}(u,v)}  \mathbf{1}_{\{d_{n}(u,v) \leq 2 \varepsilon \ln n, \, u \neq v  \}}  \right] \\
 & \leq & \frac{2 \ln n}{n^{2}} \sum_{i=1}^{n} \sum_{k=1}^{\lfloor 2 \varepsilon \ln n \rfloor} \frac{1}{k} \mathbb{E}[X^{i}(n,k)].
\end{eqnarray*}

\noindent Since each vertex in a binary search tree has at most
two descendants, we observe that $\mathbb{E}[X^{i}(n,k)] \leq 3 \cdot 2^{k-1}$. Then
\begin{eqnarray*}
 \mathbb{E} \left[ \left( \frac{2 \ln n}{d_{n}(u,v)} - \frac{ 2 \ln n}{d_{n}(u,v)} \phi_{\varepsilon} \left( \frac{d_{n}(u,v)}{ 2 \ln n} \right) \right) \mathbf{1}_{\{u \neq v \}} \right] & \leq & \frac{3 \ln n}{n} \sum_{k=1}^{\lfloor 2 \varepsilon \ln n \rfloor}  \frac{2^{k}}{k} \\
 & \leq & \frac{6 \ln n}{n} 2^{2 \varepsilon \ln n},
\end{eqnarray*}

\noindent and therefore we get (\ref{ec7}) by letting $n \rightarrow \infty$ and $\varepsilon \rightarrow 0$. \\

\noindent More generally, one can consider a generalization of the binary search trees, 
namely the $b$-ary recursive trees and check that these fulfill the conditions of 
Corollary \ref{cor4} with $\ell(n) = \frac{b}{b-1} \ln n$; we refer to 
Devroye \cite{luc}.

\paragraph*{2. Scale free random trees.} The scale-free random trees form a family of random trees that grow following a 
preferential attachment algorithm, and are used commonly to model complex real-word 
networks; see Barab\'asi and Albert \cite{Ba}. Specifically, fix a parameter $\alpha \in (-1, \infty)$, and 
start for $n=1$ from the  tree $T_{1}^{(\alpha)}$ on $\{1,2\}$ which has a single edge 
connecting $1$ and $2$. Suppose that  $T_{n}^{(\alpha)}$ has been constructed for some 
$n \geq 2$, and for every $i \in \{1, \dots, n+1\}$, denote by $g_{n}(i)$ the degree 
of the vertex $i$ in $T_{n}^{(\alpha)}$. Then conditionally given $T_{n}^{(\alpha)}$, the tree
$T_{n+1}^{(\alpha)}$ is built by adding an edge between the new vertex $n+2$ and a vertex 
$v_{n}$ in $T_{n}^{(\alpha)}$ chosen at random according to the law
\begin{eqnarray*}
 \mathbb{P} \left( v_{n} = i| T_{n}^{(\alpha)} \right) = \frac{g_{n}(i)+ \alpha}{2n+\alpha(n+1)}, \hspace*{5mm} i \in \{1, \dots, n+1\}.
\end{eqnarray*}

\noindent We observe that when one lets $\alpha \rightarrow \infty$ the algorithm yields
an uniform recursive tree. It is not difficult to check that the condition (\ref{cond3}) in 
Corollary \ref{cor4} is fulfilled with $\ell(n) = \frac{1+\alpha}{2+\alpha} \ln n$; see for instance 
\cite{Be6}. Then, it only remains to check the hypothesis (\ref{cond2}). We only prove the 
latter when $\alpha=0$, the general case follows similarly but with longer computations.
We then follow the same route as the case of the binary search trees. Pick $\varepsilon > 0$ and consider the same
function $\phi_{\varepsilon}$ that we defined previously. Therefore, it is enough to show that
\begin{eqnarray*}
\lim_{\varepsilon \rightarrow 0} \limsup_{n \rightarrow \infty}\mathbb{E} \left[ \left( \frac{  \ln n}{2 d_{n}(u,v)} - \frac{  \ln n}{2 d_{n}(u,v)} \phi_{\varepsilon} \left( \frac{2 d_{n}(u,v)}{  \ln n} \right) \right) \mathbf{1}_{\{u \neq v \}} \right] = 0,
\end{eqnarray*}

\noindent where $u$ and $v$ are two independent uniformly
distributed random vertices on $T_{n}^{(0)}$ . We observe that
\begin{eqnarray} \label{ec12}
 \mathbb{E} \left[ \left( \frac{ \ln n}{2 d_{n}(u,v)} - \frac{  \ln n}{2 d_{n}(u,v)} \phi_{\varepsilon} \left( \frac{2 d_{n}(u,v)}{  \ln n} \right) \right) \mathbf{1}_{\{u \neq v \}} \right]  \leq  \mathbb{E} \left[ \frac{ \ln n}{2 d_{n}(u,v)}  \mathbf{1}_{\{d_{n}(u,v) \leq \frac{1}{2} \varepsilon \ln n, \, u \neq v \}}  \right].
\end{eqnarray}

\noindent We write $Z^{i}(n,k)$ for the number of vertices
at distance $k \geq 1$ from the vertex $i$. Then, 
\begin{eqnarray*}
 \mathbb{E} \left[ \frac{  \ln n}{2 d_{n}(u,v)} \mathbf{1}_{\{u \neq v \}} \mathbf{1}_{\{d_{n}(u,v) \leq  \frac{1}{2} \varepsilon \ln n \}}  \right] & \leq & \frac{ \ln n}{ n^{2}} \sum_{i=1}^{n+1} \sum_{k=1}^{\lfloor  \frac{1}{2} \varepsilon \ln n \rfloor}  \frac{1}{k} \mathbb{E}[Z^{i}(n,k)]\\
 & \leq & \frac{ \ln n}{n^{2}} z^{ -\frac{1}{2} \varepsilon \ln n} \sum_{i=1}^{n+1} \mathbb{E}[G_{n}^{i}(z)], 
 \end{eqnarray*}
 
\noindent for $z  \in (0,1)$, where $G_{n}^{i}(z) = \sum_{k=0}^{\infty}  z^{k} Z^{i}(n,k+1)$. We claim the following.
\begin{lemma} \label{lema3}
 There exists $z_{0} \in (0,1)$ such that we have that
 \begin{eqnarray*}
  \mathbb{E}[G_{n}^{i}(z_{0})] \leq  e^{\frac{1+z_{0}}{2}} n^{\frac{1+z_{0}}{2}}, \hspace*{5mm} \text{for} \hspace*{2mm} i \geq 1 \hspace*{2mm} \text{and} \hspace*{2mm} n \geq 1.
 \end{eqnarray*}
 \end{lemma}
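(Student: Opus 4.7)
The plan is to derive a recursion for $u_n^i(z) := \mathbb{E}[G_n^i(z)]$ under preferential attachment, solve it in closed form for the two initial vertices, and argue that these dominate $u_n^i$ for every later-born vertex $i$.

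First, I would establish the deterministic tree identity
\begin{eqnarray*}
\sum_{j=1}^{n+1} g_n(j)\, z^{d_n(i,j)} = (1+z)\, G_n^i(z),
\end{eqnarray*}
obtained by summing the contribution $z^{d_n(i,a)} + z^{d_n(i,b)}$ of each edge $\{a,b\}$ and using that adjacent endpoints have distances from $i$ differing by exactly $1$, with $Z^i(n,k+1)$ edges joining distance $k$ to distance $k+1$. Since $\mathbb{P}(v_n=j\,|\,T_n^{(0)}) = g_n(j)/(2n)$ for $\alpha=0$, dividing by $2n$ gives $\mathbb{E}[z^{d_n(i,v_n)}\,|\,T_n^{(0)}] = (1+z)G_n^i(z)/(2n)$. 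Combined with the trivial relation $G_{n+1}^i(z) = G_n^i(z) + z^{d_n(i,v_n)}$ valid for every pre-existing vertex $i \leq n+1$, this yields the multiplicative recursion
\begin{eqnarray*}
u_{n+1}^i(z) = \bigl(1 + \tfrac{1+z}{2n}\bigr) u_n^i(z), \qquad i \leq n+1.
\end{eqnarray*}

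Second, for the newborn vertex $i = n+2$ the relation $d_{n+1}(n+2, j) = d_n(v_n, j) + 1$ gives $G_{n+1}^{n+2}(z) = 1 + z\, G_n^{v_n}(z)$. Summing the tree identity over $i$ and rearranging pointwise gives $\sum_j g_n(j) G_n^j(z) = z^{-1}\bigl((1+z) \sum_i G_n^i(z) - 2n\bigr)$, so $u_{n+1}^{n+2}(z) = (1+z) S_n(z)/(2n)$ where $S_n(z) := \sum_i u_n^i(z)$. Substituting back, the total mass satisfies $S_{n+1}(z) = (1 + (1+z)/n) S_n(z)$ with $S_1(z)=2$, giving the closed form $S_n(z) = 2\prod_{k=1}^{n-1}(1+(1+z)/k)$. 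In particular, since $u_1^1 = u_1^2 = 1$ and both follow the same recursion, $u_n^1(z) = u_n^2(z) = \prod_{k=1}^{n-1}(1 + (1+z)/(2k))$, and the standard estimate $\prod_{k=1}^{n-1}(1 + a/k) \leq e^a n^a$ (from $\ln(1+x) \leq x$ and $\sum_{k=1}^{n-1} 1/k \leq 1 + \ln n$) gives $u_n^1(z) \leq e^{(1+z)/2} n^{(1+z)/2}$.

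Finally, since $u_n^i(z)$ satisfies the same multiplicative recursion as $u_n^1(z)$ from the birth time $n=i-1$ onwards, it suffices to compare initial values: setting $R_i(z) := u_{i-1}^i(z)/u_{i-1}^1(z)$ for $i \geq 3$ and using the closed forms for $S_n$ and $u_n^1$, the ratio $R_{i+1}(z)/R_i(z)$ collapses algebraically to $(2i-2+2z)/(2i-1+z)$, which is strictly less than $1$ precisely when $z_0 < 1$. Combined with $R_2(z) = 1$, this yields $R_i(z) \leq 1$ for all $i \geq 2$, so $u_n^i(z_0) \leq u_n^1(z_0) \leq e^{(1+z_0)/2} n^{(1+z_0)/2}$, which is the claim. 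The main obstacle is this final ratio calculation; conceptually everything else is driven by the deterministic tree identity, which is what converts the preferential-attachment dynamics into a clean multiplicative recursion.
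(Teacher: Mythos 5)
Your proof is correct, and it takes a genuinely different --- and substantially leaner --- route than the paper's. The paper obtains the same multiplicative recursion $\mathbb{E}[G_{n+1}^{i}(z)]=\frac{2n+1+z}{2n}\mathbb{E}[G_{n}^{i}(z)]$ for already-present vertices, but its real difficulty is the initial value $\mathbb{E}[G_{i-1}^{i}(z)]=1+z\,\mathbb{E}[G_{i-2}^{v_{i}}(z)]$ at the birth time of vertex $i$: since the parent $v_{i}$ is chosen proportionally to degree, the paper is forced to control the correlation $\mathbb{E}[G_{n}^{j}(z)Z^{j}(n,1)]$ through a separate recursion (its Lemma \ref{lema4}) and then run a rather delicate induction (Lemma \ref{lema5}) with explicit auxiliary functions $A_{n}^{1},A_{n}^{2},A_{n}^{3}$ just to exhibit one admissible $z_{0}$. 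Your deterministic double-counting identity $\sum_{j}g_{n}(j)z^{d_{n}(i,j)}=(1+z)G_{n}^{i}(z)$ dissolves this difficulty: summed over $i$ it shows that the size-biased sum $\sum_{j}g_{n}(j)G_{n}^{j}(z)$ is a deterministic affine function of $\sum_{i}G_{n}^{i}(z)$, so the newborn's expectation is exactly $(1+z)S_{n}(z)/(2n)$ with $S_{n}(z)=2\prod_{k=1}^{n-1}\left(1+\frac{1+z}{k}\right)$ in closed form, and the comparison $\mathbb{E}[G_{n}^{i}(z)]\leq\mathbb{E}[G_{n}^{1}(z)]$ reduces to the telescoping ratio $R_{i+1}/R_{i}=\frac{2i-2+2z}{2i-1+z}<1$ (I checked this computation, together with the base case $R_{3}=\frac{2+2z}{3+z}\leq 1$; both are right). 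Two remarks: your argument actually proves the bound for \emph{every} $z_{0}\in(0,1)$ rather than for some $z_{0}$, which is strictly stronger than the statement and still suffices for the application in Section \ref{exa} (one only needs $\varepsilon<(z_{0}-1)/\ln z_{0}$ there); and the monotonicity $\mathbb{E}[G_{n}^{i}]\leq\mathbb{E}[G_{n}^{1}]$ is a clean structural fact that the paper's proof never makes visible. The only point worth spelling out in a final write-up is the pathwise verification of the edge-sum identity (each vertex at distance $k\geq 1$ from $i$ has exactly one neighbour at distance $k-1$ and the rest at distance $k+1$), but that is elementary and your sketch already indicates it.
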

 
\noindent The proof of the above lemma relies in the recursive structure of the scale-free random tree and 
for now it is convenient to postpone its proof to Section \ref{sec5}. We then consider $z_{0}$ such that the result of Lemma \ref{lema3} holds and $0 < \varepsilon < (z_{0}-1) (\ln z_{0})^{-1}$. Then
\begin{eqnarray*}
 \mathbb{E} \left[ \frac{  \ln n}{2 d_{n}(u,v)} \mathbf{1}_{\{u \neq v \}} \mathbf{1}_{\{d_{n}(u,v) \leq  \frac{1}{2} \varepsilon \ln n \}}  \right] & \leq & e^{\frac{1+z_{0}}{2}} z_{0}^{ -\frac{1}{2} \varepsilon \ln n} n^{-\frac{1-z_{0}}{2}} \ln n
 \end{eqnarray*}

\noindent and therefore, the right-hand side in (\ref{ec12}) tends to $0$ as $n \rightarrow \infty$. \\

\noindent Similarly, one can easily check that the uniform random recursive trees fulfill
the hypotheses of Corollary \ref{cor4} with $\ell(n) = \ln n$; see Chapter 6 in \cite{Drmota2}.

\paragraph*{3. Merging of regular trees.} Our next example provides a method to
build trees that fulfill the conditions of Theorem \ref{teo1} and where
the random variable $\zeta_{1}$ in hypothesis (\ref{cond3}) is not a constant. Basically, the
procedure consists on gluing trees which satisfy the assumptions of Corollary
\ref{cor4}. In this example, we consider a mixture of regular trees but one may consider other 
families of trees as well.
For a fixed integer $r \geq 1$, let $(d_{i})_{i=1}^{r}$ denote a positive sequence of 
integers. Next, for $i = 1, \dots, r$, let $h_{i}(m): \mathbb{R}_{+} \rightarrow \mathbb{R}_{+}$
be a function with $\lim_{m \rightarrow \infty} h_{i}(m) = \infty$. Moreover, we 
assume that
\begin{eqnarray*}
 d_{1}^{h_{1}(m)} \sim d_{2}^{h_{2}(m)} \sim \dots \sim d_{r}^{h_{r}(m)}, 
\end{eqnarray*}

\noindent when $m \rightarrow \infty$. Then, let $T_{n_{i}}^{(d_{i})}$ be a 
complete $d_{i}$-regular tree with height $\lfloor h_{i}(m) \rfloor$. 
Since there are $d_{i}^{j}$ vertices at distance $j=0, 1, \dots, \lfloor h_{i}(m) \rfloor$
from the root, its size is given by
\begin{eqnarray*}
 n_{i} = n_{i}(m) = d_{i} (d_{i}^{\lfloor h_{i}(m) \rfloor}-1)/(d_{i}-1).
\end{eqnarray*}

\noindent In particular, one can check that the assumptions in Theorem \ref{teo1} are
fulfilled with $\ell(n_{i}) = \ln n_{i}$. We now imagine that we merge all
the $r$ regular trees into one common root which leads us to a new tree $T_{n}^{(d)}$
of size $n = \sum_{i=1}^{r} n_{i} + 1 -r$. Then, we observe that the probability 
that a vertex of $T_{n}^{(d)}$ chosen uniformly at random belongs to the tree $T_{n_{i}}^{(d_{i})}$
converges when $m \rightarrow \infty$ to $1/r$. Then, one readily checks that this new tree 
satisfies the hypothesis (\ref{cond3}) with $\ell(n) = \ln n$ and $\zeta_{i}$ a random variable uniformly distributed in the set
$\{1/ \ln d_{1}, \dots, 1 / \ln d_{r} \}$. Furthermore, 
since the number of descendants of each vertex is bounded, it is not difficult
to see that also fulfills the condition (\ref{cond2}). Therefore, Theorem
\ref{teo1} implies that $ n^{-1} \ln n \, \text{{\rm Cut}}(T_{n}^{(d)})$
converges in distribution in the sense of pointed Gromov-Prokhorov to
the element $I_{\mu^{(d)}}$ of $\mathbb{M}$, which corresponds to the interval $[0,a)$, 
pointed at $0$, equipped with the Euclidean distance, and the probability measure 
$\mu^{(d)}$ given by (\ref{ec2}) with $\lambda(t) = \frac{1}{r} \sum_{i=1}^{r} e^{- \frac{t}{\ln d_{i}}}$ for $t \geq 0$.

\section{Applications} \label{sec4}

We now present a consequence of Theorem \ref{teo1} which generalizes a result of 
Kuba and Panholzer \cite{Kuba}, and its recent multi-dimensional extension shown by 
Baur and Bertoin \cite{Bau} on the isolation of multiple vertices in uniform random recursive trees.
Let $u_{1}, u_{2}, \dots$ denote a sequence of i.i.d. uniform random
 variables in $[n] = \{1, \dots, n\}$. We write $Z_{n,j}$ for the number of
 cuts which are needed to isolate $u_{1}, \dots, u_{j}$ in $T_{n}$. We have the following
 convergence which extends Corollary 4 in \cite{Bau}.
 
 \begin{corollary} \label{cor6}
  Suppose that (\ref{cond3}) and (\ref{cond2}) hold with $\ell$ such that
 $\ell(n) = o(\sqrt{n})$. Then as $n \rightarrow \infty$, we have that
  \begin{eqnarray*}
   \left( \frac{\ell(n)}{n} Z_{n,j} : j \geq 1 \right) \Rightarrow (\max(U_{1}, U_{2}, \dots, U_{j}): j \geq 1)
  \end{eqnarray*}
\noindent in the sense of finite-dimensional distributions, where $U_{1}, U_{2}, \dots$  is
a sequence of i.i.d. random variables with law $\mu$ given in (\ref{ec2}). 
 \end{corollary}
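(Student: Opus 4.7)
The plan is to reduce the statement to the convergence of pairwise distances in the cut-tree provided by Theorem \ref{teo1}. As recalled in the introduction, when $u_1, \ldots, u_j$ are pairwise distinct, one has
\begin{equation*}
Z_{n,j} = L_n^{(j)} - (j-1),
\end{equation*}
where $L_n^{(j)}$ denotes the number of edges of the subtree of $\text{Cut}(T_n)$ reduced to the root $0$ and the leaves $\{u_1\}, \ldots, \{u_j\}$. Since the $u_i$ are i.i.d.\ uniform on $[n]$ with $j$ fixed, they are pairwise distinct with probability tending to one, and $(\ell(n)/n)(j-1) \to 0$ because $\ell(n) = o(\sqrt{n})$. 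It therefore suffices to show that $(\ell(n)/n) L_n^{(j)}$ converges jointly in $j$ to $\max(U_1, \ldots, U_j)$.

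The key observation is that $L_n^{(j)}$ is a fixed continuous function of the cut-tree distance matrix on $\{0, u_1, \ldots, u_j\}$. Writing $g_n(x, y) = \frac{1}{2}( \delta_n(0, x) + \delta_n(0, y) - \delta_n(x, y) )$ for the depth of the most recent common ancestor of $x$ and $y$ in the cut-tree, an incremental construction of the reduced subtree (at each stage, adjoining the unique path from the new leaf to the current reduced subtree) gives
\begin{equation*}
L_n^{(j)} = \delta_n(0, u_1) + \sum_{i=2}^{j} \Bigl( \delta_n(0, u_i) - \max_{1 \leq k < i} g_n(u_i, u_k) \Bigr) =: F_j(\delta_n).
\end{equation*}
The same formula $F_j$ applied to distances between $0$ and $j$ points of $I_\mu$ yields the length of the corresponding reduced real subtree there.

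By Theorem \ref{teo1} and the Gromov-Prokhorov characterisation recalled in the introduction, the rescaled distance matrices on $\{0, \xi_n(1), \ldots, \xi_n(j)\}$ converge in finite-dimensional distribution (and jointly in $j$, since a single i.i.d.\ sample drives all of them) to those on $\{0, U_1, U_2, \ldots\}$ in $I_\mu$, with $U_i$ i.i.d.\ of law $\mu$. Applying the continuous mapping theorem to $F_j$, and using that $g(U_i, U_k) = \min(U_i, U_k)$ on the interval $I_\mu$, I obtain
\begin{equation*}
\frac{\ell(n)}{n} L_n^{(j)} \Rightarrow U_1 + \sum_{i=2}^{j} \bigl( U_i - \max_{k<i} U_k \bigr)_+ = \max(U_1, \ldots, U_j),
\end{equation*}
jointly in $j$, where the first equality simplifies $\max_{k<i} \min(U_i, U_k)$ to $\min(U_i, \max_{k<i} U_k)$ by a case analysis on whether $U_i > \max_{k<i} U_k$, and the second is a telescoping identity. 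Combined with the reductions of the first paragraph, this proves the corollary.

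The argument is essentially linear once the reduction to distances is in place; the only conceptual step is recognising $L_n^{(j)}$ as a continuous functional of the finite distance matrix, which is precisely what makes the Gromov-Prokhorov convergence of Theorem \ref{teo1} strong enough to handle an arbitrary number $j$ of sampled vertices.
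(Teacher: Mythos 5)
Your proof is correct and follows essentially the same route as the paper: identify $Z_{n,j}$ (up to the negligible $j-1$ correction) with the length of the subtree of $\text{Cut}(T_{n})$ reduced to the root and $j$ sampled leaves, note that this length is a continuous functional of the sampled distance matrix, and conclude from Theorem \ref{teo1} via the Gromov--Prokhorov characterisation, using $|\mathcal{R}_{j}|=\max(U_{1},\dots,U_{j})$ on the interval. You merely make explicit (via the Gromov-product formula $F_{j}$ and the treatment of coincident samples) steps that the paper's proof leaves implicit.
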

 
\begin{proof} For a fixed integer $j \geq 1$, $u_{1}, \dots, u_{j}$ are
$j$ independent uniform vertices of $T_{n}$, or equivalently, the singletons
$\{u_{1}\}, \dots, \{u_{j}\}$ form a sequence of $j$ i.i.d. leaves of ${\rm Cut}(T_{n})$
distributed according the uniform law. Denote by $\mathcal{R}_{n,j}$ 
the subtree of ${\rm Cut}(T_{n})$ spanned by its root and $j$ i.i.d. leaves chosen 
according to the uniform distribution on $[n]$. Similarly, write $\mathcal{R}_{j}$ 
for the subtree of $I_{\mu}$ spanned by $0$ and $j$ i.i.d. random variables with law $\mu$, say $U_{1}, \dots, U_{j}$. We adopt the framework of Aldous 
\cite{A}, and see both reduced trees as combinatorial trees structure with edge lengths.
Therefore, Theorem \ref{teo1} entails that $ n^{-1} \ell(n) \mathcal{R}_{n,j}$ converges
weakly in the sense of Gromov-Prokhorov to $\mathcal{R}_{j}$ as $n \rightarrow \infty$. In 
particular, we have the convergence of the lengths of those reduced trees,
\begin{eqnarray*}
  \left( \frac{\ell(n)}{n} |\mathcal{R}_{n,1}|, 
 \dots, \frac{\ell(n)}{n} |\mathcal{R}_{n,j}| \right) \Rightarrow \left(|\mathcal{R}_{1}|, 
 \dots, |\mathcal{R}_{j}| \right).
\end{eqnarray*}
\noindent It is sufficient to observe that $|\mathcal{R}_{j}| = \max (U_{1}, \dots, U_{j} )$.
\end{proof}

In particular, when the hypotheses (\ref{cond3}) and (\ref{cond2}) hold with $\zeta_{1} \equiv 1$,
we observe from Corollary \ref{cor4} that the variables 
$U_{1}, U_{2}, \dots$ have the uniform distribution on $[0,1]$, 
and moreover, $\frac{\ell(n)}{n} Z_{n,j}$ converges
in distribution to a beta$(j,1)$ random variable. \\

As another application, for $j \geq 2$ we consider the algorithm for isolating the vertices $u_{1}, \dots, u_{j}$
with a slight modification, we discard the emerging tree components which 
contain at most one of these $j$ vertices. We stop the algorithm
when the $j$ vertices are totally disconnected from each other, i.e. lie in $j$
different tree components. We write $W_{n,2}$ for the number of steps of this algorithm until for the first time 
$u_{1}, \dots, u_{j}$ do not longer belong to the same tree component, moreover
$W_{n,3}$ for the number of steps until the first time, the $j$ vertices are spread out
over three distinct tree components, and so on, up to $W_{n, j}$, the number of steps
until the $j$ vertices are totally disconnected. We have the following
consequence of Corollary \ref{cor2}, which extends Corollary 4 in \cite{Bau}.

\begin{corollary}
 Suppose that (\ref{cond3}) and (\ref{cond2}) hold with $\ell$ such that
 $\ell(n) = o(\sqrt{n})$. Then as $n \rightarrow \infty$, we have that
  \begin{eqnarray*}
   \left( \frac{\ell(n)}{n} W_{n,2}, \dots, \frac{\ell(n)}{n} W_{n,j} \right) \Rightarrow \left( U_{(1,j)}, \dots, U_{(j-1,j)} \right),
  \end{eqnarray*}
\noindent where $U_{(1, j)} \leq U_{(2,j)} \leq \cdots \leq U_{(j-1,j)}$ 
denote the first $j-1$ order statistics of an i.i.d. sequence
$U_{1}, \dots, U_{j}$ of random variables with law $\mu$ given in (\ref{ec2}). 
\end{corollary}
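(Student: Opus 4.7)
The plan is to combine Theorem~\ref{teo1} with the very specific geometry of the limit $I_{\mu}$, applied to the reduced subtree $\mathcal{R}_{n,j}$ of $\text{Cut}(T_{n})$ spanned by its root $0$ and the $j$ uniform leaves $u_{1},\dots,u_{j}$, following the approach of the proof of Corollary~\ref{cor6}. The first step is to invoke Theorem~\ref{teo1} in the Aldous framework of combinatorial trees with edge lengths, yielding that $(\ell(n)/n)\,\mathcal{R}_{n,j}$ converges weakly to the reduced subtree $\mathcal{R}_{j}$ of $I_{\mu}$ spanned by $0$ and by an i.i.d.\ sample $U_{1},\dots,U_{j}$ of law $\mu$. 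The passage from Gromov-Prokhorov convergence (of distances) to Aldous convergence (of combinatorial shape together with edge lengths) relies on the elementary fact that the shape of a finite tree with labeled leaves is determined by its matrix of root-to-leaf and leaf-to-leaf distances.

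Since $I_{\mu}$ is an interval pointed at $0$, the tree $\mathcal{R}_{j}$ is always a caterpillar: its $j-1$ internal branching vertices lie on the geodesic $[0,\max_{i}U_{i}]$, at the depths $U_{(1,j)}<U_{(2,j)}<\cdots<U_{(j-1,j)}$, and each of them separates exactly one of the $U_{i}$'s from the remaining ones. By the convergence just recalled, with probability tending to $1$ as $n\to\infty$, $\mathcal{R}_{n,j}$ inherits this caterpillar combinatorial shape; denoting by $B_{1}^{(n)},\dots,B_{j-1}^{(n)}$ its internal branching blocks, ordered by their depths $D_{1}^{(n)}<\cdots<D_{j-1}^{(n)}$ in $\text{Cut}(T_{n})$, one also gets the joint convergence $(\ell(n)/n)(D_{1}^{(n)},\dots,D_{j-1}^{(n)})\Rightarrow(U_{(1,j)},\dots,U_{(j-1,j)})$.

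The second step is to show that, on the caterpillar event, $W_{n,k}=D_{k-1}^{(n)}+1$ for every $k\in\{2,\dots,j\}$. Under the caterpillar shape, each branching $B_{i}^{(n)}$ splits off exactly one of the distinguished vertices; therefore, after the $i$-th separation, only one tree component still contains at least two distinguished vertices, while all other components contain a single one and are frozen by the modified algorithm. Between the $(i-1)$-th and the $i$-th separation, the modified algorithm thus simply descends the cut-tree path from $B_{i-1}^{(n)}$ to $B_{i}^{(n)}$, every intermediate cut producing an inactive tree component with zero distinguished vertices. A direct induction on $k$ then yields the identity $W_{n,k}=D_{k-1}^{(n)}+1$, from which the announced joint convergence follows, because the additive $+1$ vanishes after rescaling by $\ell(n)/n$ and the complement of the caterpillar event has vanishing probability.

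I expect the main obstacle to be the careful bookkeeping for the inductive identity $W_{n,k}=D_{k-1}^{(n)}+1$: one must track at each step of the modified algorithm which subtree of $\text{Cut}(T_{n})$ corresponds to the unique active tree component, and verify that the cuts performed in it are indexed exactly by the ancestors in $\text{Cut}(T_{n})$ of the next branching $B_{k-1}^{(n)}$. Once this is established, the result is essentially a corollary of Theorem~\ref{teo1} combined with the rigidity of the caterpillar structure of $\mathcal{R}_j$.
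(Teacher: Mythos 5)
Your proof is correct, but it follows a genuinely different route from the paper's. The paper does not pass through the reduced subtree at all: it writes $Y_{i}^{(n)}$ for the number of cuts needed to disconnect $u_{i}$ from the root component, observes that $W_{n,k}$ differs from the order statistic $Y^{(n)}_{k-1,j}$ only by cuts performed in components that no longer contain the root, controls this discrepancy by Proposition \ref{cor5} (so that $\frac{\ell(n)}{n}\bigl(W_{n,k}-Y^{(n)}_{k-1,j}\bigr)\to 0$ in probability), and concludes from the joint convergence of $\bigl(\frac{\ell(n)}{n}Y_{i}^{(n)}\bigr)_{i}$ to i.i.d.\ variables of law $\mu$ established in Corollary \ref{cor3}. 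You instead extract the combinatorial shape of $\mathcal{R}_{n,j}$ from Theorem \ref{teo1} and exploit the exact identity $W_{n,k}=D^{(n)}_{k-1}+1$ on the caterpillar event; that identity and its induction are sound, since on that event there is always exactly one active component and each cut between consecutive branching blocks descends exactly one level of the cut-tree. What your route buys is an exact combinatorial description of the $W_{n,k}$; what it costs is the step you pass over quickly. The claim that $\mathcal{R}_{n,j}$ is a caterpillar with probability tending to one does not follow merely from the fact that distances determine the shape of a finite labelled tree, because convergence of rescaled distances does not by itself pin down the prelimit shape. One must note that on any non-caterpillar event some block of $\mathrm{Cut}(T_{n})$ is the common most recent ancestor of two groups of at least two distinguished leaves, which forces exact equalities among rescaled ancestor depths, e.g.\ $\min(U_{1},U_{3})=\min(U_{1},U_{4})=\min(U_{2},U_{3})=\min(U_{2},U_{4})$ in the limit; such configurations require ties among the $U_{i}$ and hence have probability zero since $\mu$ is non-atomic, and a portmanteau argument over the finitely many shapes then yields the claim. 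With that genericity argument supplied, your proof is complete; the paper's comparison with the $Y_{i}^{(n)}$'s sidesteps this discussion entirely, which is why its proof is only a few lines.
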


\begin{proof} Recall the notation of Corollary \ref{cor2}, and
write $Y_{i}^{(n)}$ for the number of cuts which are needed to disconnect the
vertex $u_{i}$ from the root component. We then 
observe that if we write $Y^{(n)}_{1, j} \leq Y^{(n)}_{2, j} \leq \cdots \leq Y^{(n)}_{j-1, j}$
for the first order statistics of the sequence of random variables 
$Y_{1}^{(n)}, \dots, Y_{j}^{(n)}$, it follows from Proposition \ref{cor5} that 
\begin{eqnarray*}
 \lim_{n \rightarrow \infty}  \frac{\ell(n)}{n} ( W_{n, i} - Y^{(n)}_{i-1, j}) = 0 \hspace*{5mm} \text{ in probability.}
\end{eqnarray*}

\noindent Therefore, our claim follows
immediately from Corollary \ref{cor2}. 
\end{proof}

As before, when  (\ref{cond3}) and (\ref{cond2}) hold with $\zeta_{1} \equiv 1$,
the variables $U_{1}, U_{2}, \dots$ have the uniform distribution on $[0,1]$, 
and then, $\frac{\ell(n)}{n} W_{n,j}$ converges
in distribution to a beta$(1,j)$ random variable, and 
$\frac{\ell(n)}{n} W_{n,j}$ converges in distribution to a beta$(j-1,2)$ law.

\section{Proof of Lemma \ref{lema3}} \label{sec5}

The purpose of this final section is to establish Lemma \ref{lema3}. The proof 
relies on the recursive structure of the scale-free random trees, and our
guiding line is similar to that in \cite{Ka} and \cite{Ka2}. We recall that we only consider
the case when the parameter $\alpha$ of the scale-free random tree is zero, but that 
the general case can be treated similarly. \\

Recall that the construction of the scale-free tree starts at $n=1$ from the tree $T_{1}^{(0)}$ on $\{1,2\}$ which has a single edge 
connecting $1$ and $2$. Suppose that  $T_{n}^{(0)}$ has been constructed for some 
$n \geq 2$, then conditionally given $T_{n}^{(0)}$, the tree $T_{n+1}^{(0)}$ is built 
by adding an edge between the new vertex $n+2$ and a vertex 
$v_{n}$ in $T_{n}^{(0)}$ chosen at random according to the law
\begin{eqnarray*}
\mathbb{P} \left( v_{n} = i| T_{n}^{(0)} \right) = \frac{g_{n}(i)}{2n}, \hspace*{5mm} i \in \{1, \dots, n+1\}.
\end{eqnarray*}

\noindent where $g_{n}(i)$ denotes the degree of the vertex $i$ in $T_{n}^{(0)}$. Let $Z^{i}(n, k)$ denote the number of vertices at distance $k \geq 0$ from the vertex $i$
after the $n$-th step. We are interested in the expectation of the generating 
function 
\begin{eqnarray*}
 G_{n}^{i}(z) = \sum_{k = 0}^{\infty} Z^{i}(n, k+1) z^{k}, \hspace*{5mm} n\geq 1,
\end{eqnarray*}

\noindent for $z \in (0,1)$. In particular, $G_{n}^{1}(\cdot)$ is the so-called height profile function; see
Katona \cite{Ka, Ka2} for several results related to this function. 
To compute $\mathbb{E}[ G_{n}^{i}(z)]$ we use the evolution process of the construction
of $T_{n}^{(0)}$ and conditional expectation. Let $\mathcal{F}_{n}$ denote 
the $\sigma$-field generated by the first $n$ steps in the procedure. The number of vertices at distance
$k$ from $i$ increases by one or does not change. Then for $n \geq i-1$,
\begin{eqnarray*}
 \mathbb{E}[Z^{i}(n+1, 1) | \mathcal{F}_{n}] = (Z^{i}(n, 1) + 1) \frac{Z^{i}(n, 1)}{2n} + Z^{i}(n, 1) \left(1 -\frac{Z^{i}(n, 1)}{2n} \right) = \frac{2n +1 }{2n}Z^{i}(n, 1),
\end{eqnarray*}

\noindent and for $k > 1$ we have 
\begin{align*} \label{ec13}
  & \mathbb{E}[Z^{i}(n+1, k) | \mathcal{F}_{n}]  \\
  & ~~~~= (Z^{i}(n, k) + 1) \frac{Z^{i}(n, k) + Z^{i}(n, k-1) }{2n}   + Z^{i}(n, k) \left(1 -\frac{Z^{i}(n, k) + Z^{i}(n, k-1)}{2n} \right) \\
  &  ~~~~ =  \frac{2n+1}{2n} Z^{i}(n, k) + \frac{1}{2n}Z^{i}(n, k-1), 
\end{align*}

\noindent where $Z^{1}(0, k) = 0$ and $Z^{i}(i-2, k) = 0$ for $2 \leq i \leq n+1 $. Taking the 
expectation this leads to the recurrence relation
\begin{eqnarray*}
 \mathbb{E}[G_{n+1}^{i}(z)] = \frac{2n + 1 +z}{2n} \mathbb{E}[G_{n}^{i}(z)].
\end{eqnarray*}

\noindent Since $G_{1}^{1}(z) = G_{1}^{2}(z) =1$, the above recursive formula leads to
\begin{eqnarray} \label{pec1}
 \mathbb{E}[G_{n}^{1}(z)] = \mathbb{E}[G_{n}^{2}(z)] = \prod_{j=1}^{n-1} \frac{2j + 1 +z}{2j},
\end{eqnarray}

\noindent and for $3 \leq i \leq n+1$
\begin{eqnarray} \label{pec4}
 \mathbb{E}[G_{n}^{i}(z)] =  \left( \prod_{j=i-1}^{n-1} \frac{2j + 1 +z}{2j}  \right) \mathbb{E}[G_{i-1}^{i}(z)].
\end{eqnarray}

\noindent with the convention that $\prod_{j=n}^{n-1} \frac{2j + 1 +z}{2j} = 1$. We point out that $G_{n}^{i}(z) = 0$ for $n \leq i-2$.
We have the following technical result which will be crucial in the proof of Lemma \ref{lema3}.

\begin{lemma} \label{lema4}
 For $2 \leq i \leq n$, we have that
\begin{align*}
 & \mathbb{E}[G_{n}^{i}(z)Z^{i}(n,1)]   \\
 & ~~~ = \left( \prod_{j=i-1}^{n-1} \frac{2j+2+z}{2j} \right) \mathbb{E}[G_{i-1}^{i}(z)] + \sum_{k=i-1}^{n-1} \left( \prod_{j=k+1}^{n-1} \frac{2j+2+z}{2j} \right) \frac{1}{2k} \mathbb{E}[Z^{i}(k,1)],
\end{align*}
\noindent and
\begin{eqnarray*}
 \mathbb{E}[G_{n}^{1}(z)Z^{1}(n,1)] = \prod_{j=1}^{n-1} \frac{2j+2+z}{2j}  + \sum_{k=1}^{n-1} \left( \prod_{j=k}^{n-1} \frac{2j+2+z}{2j} \right) \frac{1}{2k} \mathbb{E}[Z^{1}(k,1)].
\end{eqnarray*}
\end{lemma}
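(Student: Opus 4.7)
The plan is to derive a closed first-order linear recurrence in $n$ for
\[
a_n^{(i)} := \mathbb{E}[G_n^i(z) Z^i(n, 1)]
\]
by the same style of one-step conditioning on $\mathcal{F}_n$ that produced \eqref{pec1}, and then unwind it. Fix $i \geq 1$ and write $\Delta G := G_{n+1}^i(z) - G_n^i(z)$ and $\Delta Z := Z^i(n+1, 1) - Z^i(n, 1)$. Under the preferential attachment rule, exactly one of two things happens at step $n \to n+1$: either the new vertex attaches to $i$ itself (conditional probability $Z^i(n,1)/(2n)$, in which case $\Delta G = \Delta Z = 1$), or it attaches to a vertex at some distance $d \geq 1$ from $i$ and hence lies at distance $d+1$, in which case $\Delta G = z^{d}$ and $\Delta Z = 0$.

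The key combinatorial input is the degree-sum identity $\sum_{j : d(i,j) = d} g_n(j) = Z^i(n, d) + Z^i(n, d+1)$ for $d \geq 1$, which holds because $T_n^{(0)}$ is a tree and so each vertex at distance $d$ from $i$ has a unique parent at distance $d-1$. Feeding this into the attachment weights and using the telescoping identity
\[
\sum_{d \geq 1} z^{d} \bigl( Z^i(n, d) + Z^i(n, d+1) \bigr) \;=\; z G_n^i(z) + \bigl( G_n^i(z) - Z^i(n, 1) \bigr) \;=\; (1+z) G_n^i(z) - Z^i(n, 1),
\]
I obtain $\mathbb{E}[\Delta G \mid \mathcal{F}_n] = (1+z) G_n^i(z)/(2n)$ and $\mathbb{E}[\Delta Z \mid \mathcal{F}_n] = \mathbb{E}[\Delta G \, \Delta Z \mid \mathcal{F}_n] = Z^i(n, 1)/(2n)$ (the last because $\Delta G\,\Delta Z$ is nonzero only in the attach-to-$i$ case, where both factors equal $1$). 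Expanding $(G_n^i(z) + \Delta G)(Z^i(n, 1) + \Delta Z)$ and collecting the four resulting conditional expectations then yields the clean identity
\[
\mathbb{E}[G_{n+1}^i(z) Z^i(n+1, 1) \mid \mathcal{F}_n] \;=\; \frac{2n + 2 + z}{2n}\, G_n^i(z) Z^i(n, 1) + \frac{Z^i(n, 1)}{2n}.
\]

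Taking expectations turns this into the scalar first-order linear recurrence
\[
a_{n+1}^{(i)} \;=\; \frac{2n + 2 + z}{2n}\, a_n^{(i)} + \frac{1}{2n}\, \mathbb{E}[Z^i(n, 1)],
\]
whose explicit variation-of-constants solution produces exactly the two displayed formulas once the correct starting index is inserted. That starting index is the first time at which vertex $i$ is present: for $i \geq 2$, vertex $i$ appears at step $n = i - 1$ with a single neighbour $v_{i-2}$, so $Z^i(i-1, 1) = 1$ and hence $a_{i-1}^{(i)} = \mathbb{E}[G_{i-1}^i(z)]$; for $i = 1$ one starts at $n = 1$ with $a_1^{(1)} = G_1^1(z) Z^1(1, 1) = 1$.

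The one step I expect to be the real obstacle is producing this clean conditional recurrence in the first place: a priori $\mathbb{E}[\Delta G \mid \mathcal{F}_n]$ couples \emph{all} the profile coordinates $Z^i(n, \cdot)$, and it is the telescoping cancellation $Z^i(n, 1) + z G_n^i(z) + (G_n^i(z) - Z^i(n, 1)) = (1 + z) G_n^i(z)$ that collapses this dependence back onto the single quantity $a_n^{(i)}$. Once that collapse is in place, the remainder is routine index-bookkeeping directly parallel to the derivation of \eqref{pec1}--\eqref{pec4}.
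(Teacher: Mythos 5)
Your proof is correct and follows essentially the same route as the paper: one-step conditioning on $\mathcal{F}_n$ to get $\mathbb{E}[\Delta G\mid\mathcal{F}_n]=(1+z)G_n^i(z)/(2n)$ and $\mathbb{E}[\Delta Z\mid\mathcal{F}_n]=\mathbb{E}[\Delta G\,\Delta Z\mid\mathcal{F}_n]=Z^i(n,1)/(2n)$, hence the linear recurrence $\mathbb{E}[G_{n+1}^i(z)Z^i(n+1,1)]=\tfrac{2n+2+z}{2n}\mathbb{E}[G_n^i(z)Z^i(n,1)]+\tfrac{1}{2n}\mathbb{E}[Z^i(n,1)]$, unwound from $n=i-1$ (resp.\ $n=1$); you additionally make explicit the degree-sum identity behind the attachment weights, which the paper states without justification. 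The only caveat is that your unwinding yields $\prod_{j=k+1}^{n-1}$ in the sum for $i=1$ as well, whereas the displayed $i=1$ formula has $\prod_{j=k}^{n-1}$ --- checking $n=2$ shows the displayed version is a typo and yours is the correct one (and since $\tfrac{2j+2+z}{2j}\geq 1$ the discrepancy is harmless for the upper bounds the lemma is used for).
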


\begin{proof}
We only prove the case when $2 \leq i \leq n$, the case $i=1$ follows exactly by the 
same argument. For $n \geq i-1 \geq 1$, we observe that $G_{n+1}^{i}(z) = G_{n}^{i}(z) + K^{i}_{n}(z)$ where 
 \begin{eqnarray*}
\mathbb{P}(K_{n}^{i}(z) = z^{k-1} | \mathcal{F}_{n}) = \left\{ \begin{array}{lcl}
              \frac{Z^{i}(n,k) + Z^{i}(n,k-1) }{2n} &  &k > 1\\
              \frac{Z^{i}(n,1)}{2n}  &  & k=1, \\
              \end{array}
    \right.
\end{eqnarray*}

\noindent and $Z^{i}(n+1,1) = Z^{i}(n,1) + B_{n}^{i}$ where
\begin{eqnarray*}
 \mathbb{P}(B_{n}^{i}= 1 | \mathcal{F}_{n}) = 1 -  \mathbb{P}(B_{n}^{i}= 0 | \mathcal{F}_{n}) = \frac{Z^{i}(n,1)}{2n}. 
\end{eqnarray*}

\noindent This yields
\begin{eqnarray*}
 \mathbb{E}(K_{n}^{i}(z)| \mathcal{F}_{n}) = \frac{1+z}{2n} G_{n}^{i}(z), \hspace*{3mm} \text{and} \hspace*{3mm} \mathbb{E}(B_{n}^{i}| \mathcal{F}_{n}) =  \mathbb{E}(K_{n}^{i}(z) B_{n}^{i} | \mathcal{F}_{n}) = \frac{Z^{i}(n,1)}{2n}.  
\end{eqnarray*}

\noindent Then, it follows that
\begin{eqnarray*}
 \mathbb{E}[G_{n+1}^{i}(z)Z^{i}(n+1,1)] & = & \mathbb{E}[ ( G_{n}^{i}(z) + K^{i}_{n}(z)) (Z^{i}(n,1) + B_{n}^{i}) |\mathcal{F}_{n}] \\ 
 & = & \frac{2n + 2 + z}{2n} \mathbb{E}[G_{n}^{i}(z)Z^{i}(n,1)] + \frac{1}{2n} \mathbb{E}[Z^{i}(n,1)].
\end{eqnarray*}

\noindent Since $Z^{i}(i-1,1) = 1$, this recursive formula yields to our result. 
\end{proof}

Next, we observe that for $1 \leq i \leq n+1$ the variable $Z^{i}(n,1)$ is the
degree of the vertex $i$ after the $n$-step, which first moment is given
by (see \cite{Mo})
\begin{eqnarray} \label{pec3}
 \mathbb{E}[Z^{1}(n,1)] = \prod_{j=1}^{n-1} \frac{2j + 1}{2j}, \hspace*{4mm} \text{and} \hspace*{4mm} \mathbb{E}[Z^{i}(n,1)] = \prod_{j=i-1}^{n-1} \frac{2j + 1}{2j}, \hspace*{3mm} \text{for} \hspace*{2mm} 2 \leq i \leq n+1
\end{eqnarray}

\noindent with the convention that $\prod_{j=n}^{n-1} \frac{2j + 1}{2j} = 1$. \\

We recall some technical results that will be useful later on. 
We have the following well-known inequality,
\begin{eqnarray} \label{pec2}
 1+x \leq e^{x}, \hspace*{6mm} x \in\mathbb{R}.
\end{eqnarray}

\noindent Then, we can easily deduce that
\begin{eqnarray} \label{pec6}
 \prod_{j=i-1}^{n-1} \frac{2j+2+z}{2j} \leq e^{\frac{2+z}{2}} \left(\frac{n-1}{i-1} \right)^{\frac{2+z}{2}} \hspace*{5mm} \text{and} \hspace*{5mm} \prod_{j=i-1}^{n-1} \frac{2j + 1}{2j} \leq e^{\frac{1}{2}} \left( \frac{n-1}{i-1} \right),
\end{eqnarray}

\noindent for $2 \leq i \leq n$. We recall also that by the Euler-Maclaurin formula we have that
\begin{eqnarray*}
 \sum_{j=1}^{n} \left(\frac{1}{j} \right)^{s} = \left(\frac{1}{n}\right)^{s-1}  + s \int_{1}^{n} \frac{\lfloor x \rfloor}{x^{s+1}} {\rm d} x, \hspace*{5mm} \text{with} \hspace*{5mm} s \in \mathbb{R} \setminus \{1 \},
\end{eqnarray*}

\noindent for $n \geq 1$. Then,
\begin{eqnarray} \label{pec7}
 \sum_{j=1}^{n} \left(\frac{1}{j} \right)^{s} \leq \left( 1 + \frac{s}{1-s} \right) n^{1-s}, \hspace*{5mm} \text{for} \hspace*{2mm} s \in (0,1),
\end{eqnarray}

\noindent and
\begin{eqnarray} \label{pec8}
 \sum_{j=1}^{n} \left(\frac{1}{j} \right)^{s} \leq \frac{s}{s-1}, \hspace*{5mm} \text{for} \hspace*{2mm} s > 1.
\end{eqnarray}



\begin{lemma} \label{lema5}
 There exists $z_{0} \in (0,1)$ such that
 \begin{eqnarray} \label{pec9}
  \mathbb{E}[ G_{i-1}^{i}(z_{0})] \leq (i-1)^{\frac{1+z_{0}}{2}}, \hspace*{5mm} \text{for} \hspace*{2mm} i \geq 2.
 \end{eqnarray} 
\end{lemma}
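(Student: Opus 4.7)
The proof proceeds by strong induction on $i \geq 2$. For the base cases, direct computation shows that $G_1^2(z) \equiv 1$ (vertex $2$ has only vertex $1$ as a neighbor in $T_1^{(0)}$) and $G_2^3(z) \equiv 1+z$ (regardless of whether the parent of vertex $3$ chosen at step $2$ is $1$ or $2$). The bound (\ref{pec9}) then reduces to the elementary inequality $1 + z_0 \leq 2^{(1+z_0)/2}$, which is valid on $[0,1]$ (equality at $z_0 = 1$). A few additional small values of $i$ should be verified directly, in order to give the inductive step enough slack to absorb the additive constants that will appear.

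The inductive step rests on the following structural observation: when vertex $i$ is inserted at step $i-1$, its only neighbor is the preferentially-attached parent $v_{i-2}\in\{1,\dots,i-1\}$, so
\begin{equation*}
G_{i-1}^{i}(z) \;=\; 1 + z\, G_{i-2}^{v_{i-2}}(z).
\end{equation*}
Conditioning on $T_{i-2}^{(0)}$ and using $\mathbb{P}(v_{i-2}=j\mid T_{i-2}^{(0)}) = Z^{j}(i-2,1)/(2(i-2))$ then yields
\begin{equation*}
\mathbb{E}[G_{i-1}^{i}(z_0)] \;=\; 1 + \frac{z_0}{2(i-2)}\sum_{j=1}^{i-1} \mathbb{E}\!\bigl[G_{i-2}^{j}(z_0)\,Z^{j}(i-2,1)\bigr].
\end{equation*}

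To each summand I would apply Lemma \ref{lema4} (distinguishing $j=1$ from $j\geq 2$ as in that lemma), decomposing $\mathbb{E}[G_{i-2}^j(z_0)Z^j(i-2,1)]$ into a dominant term proportional to $\mathbb{E}[G_{j-1}^j(z_0)]$ and a correction sum involving the degree means $\mathbb{E}[Z^j(k,1)]$. The induction hypothesis controls the dominant term by $(j-1)^{(1+z_0)/2}$, while the correction terms are handled using the explicit formula (\ref{pec3}). All products of the form $\prod(1+(2+z_0)/(2k))$ and $\prod(1+1/(2k))$ appearing throughout are estimated by (\ref{pec6}), and the resulting double sums in $j$ and $k$ are of pure power type, to which (\ref{pec7}) (for exponents $<1$) and (\ref{pec8}) (for exponents $>1$) apply.

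After collecting all contributions the right-hand side takes the form $1 + z_0\,C(z_0)\,(i-1)^{(1+z_0)/2}$ plus lower-order corrections, for an explicit constant $C(z_0)$ that remains bounded as $z_0\downarrow 0$. Since the prefactor $z_0$ tends to $0$ while $C(z_0)$ stays bounded, one may choose $z_0\in(0,1)$ small enough that $z_0 C(z_0)$ is strictly less than $1$ with sufficient margin to absorb the residual additive constants into $(i-1)^{(1+z_0)/2}$ for $i$ sufficiently large, the finitely many remaining $i$ being folded into the base case. The main technical obstacle is precisely this bookkeeping: the crude inequality (\ref{pec6}) loses a factor of $e^{(2+z_0)/2}$, which forces $z_0$ to be taken rather small (essentially $z_0 \lesssim 1/e$) in order for the final coefficient to fall below $1$.
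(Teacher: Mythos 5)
Your proposal follows essentially the same route as the paper's proof: the recursion $G_{i-1}^{i}(z)=1+zG_{i-2}^{v}(z)$ conditioned on the preferentially-attached parent, the application of Lemma \ref{lema4} together with (\ref{pec3}), (\ref{pec6}), (\ref{pec7}), (\ref{pec8}), and an induction closed by taking $z_{0}$ small so that the coefficient (which the paper makes explicit via the functions $A_{n}^{1},A_{n}^{2},A_{n}^{3}$ and the condition $3^{-\frac{1+z_{0}}{2}}+\frac{z_{0}}{2}(A_{4}^{1}+A_{4}^{2}+A_{4}^{3}+\frac{1}{2})\leq 1$) falls below $1$. The bookkeeping you defer is precisely what the paper carries out, so the argument is correct and not materially different.
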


\begin{proof}
First, we focus on finding the correct $z_{0}$. For $i \geq 4$, let $v_{i}$ be the parent of the vertex $i$ which is distributed according to the law 
\begin{eqnarray*}
\mathbb{P} \left( v_{i} = j| T_{i-2}^{(0)} \right) = \frac{Z^{j}(i-2,1)}{2(i-2)}, \hspace*{5mm} j \in \{1,2, \dots, i-1\}.
\end{eqnarray*}

\noindent Then, we have that
\begin{eqnarray} \label{pec5}
 \mathbb{E}[G_{i-1}^{i}(z)] & = & 1+z \mathbb{E}[G^{v_{i}}_{i-2}(z)] \nonumber \\
                            & = & 1 + z \sum_{j=1}^{i-1} \mathbb{E}[ G_{i-2}^{j}(z) \mathbf{1}_{\{v_{i} = j\}}] \nonumber \\
                            & = & 1+\frac{z}{2(i-2)} \sum_{j=1}^{i-1} \mathbb{E}[ G_{i-2}^{j}(z) Z^{j}(i-2,1)].
\end{eqnarray}

\noindent We observe that Lemma \ref{lema4}, (\ref{pec3}) and (\ref{pec6}) imply after some computations that
\begin{align*}
 & \mathbb{E}[ G_{n-2}^{j}(z) Z^{j}(n-2,1)]  \\
 & ~ \leq e^{\frac{2+z}{2}} \left(\frac{n-3}{j-1} \right)^{\frac{2+z}{2}} \left( \mathbb{E}[ G_{j-1}^{j}(z)] + \frac{e^{\frac{1}{2}}}{2} (j-1)^{-\frac{3+z}{2}} \sum_{k=j-1}^{n-4} \left( \frac{1}{k} \right)^{\frac{3+z}{2}} \right)+ \frac{1}{2}e^{\frac{1}{2}}  \left(\frac{1}{(n-3)(j-1)} \right)^{\frac{1}{2}}  
\end{align*}

\noindent for $2 \leq j \leq n-3$. Then the inequalities (\ref{pec7})  and (\ref{pec8}) imply that
\begin{equation}
\begin{aligned}  
 & \sum_{j=2}^{n-3} \mathbb{E}[ G_{n-2}^{j}(z) Z^{j}(n-2,1)] \\
 & ~ \leq e^{\frac{2+z}{2}} (n-3)^{\frac{2+z}{2}} \left( \sum_{j=2}^{n-3} \left( \frac{1}{j-1} \right)^{\frac{2+z}{2}} \mathbb{E}[ G_{j-1}^{j}(z)] + e^{\frac{1}{2}} \frac{3 + z}{1+z} (n-3)^{\frac{1}{2}} \right)+ \frac{e^{\frac{1}{2}}}{(n-3)^{\frac{1}{2}}},
\end{aligned}
\label{pec10}
\end{equation}

\noindent for $n \geq 5$. Similarly, one gets that
\begin{eqnarray} \label{pec11}
 \mathbb{E}[ G_{n-2}^{1}(z) Z^{1}(n-2,1)]  \leq e^{\frac{2+z}{2}} (n-3)^{\frac{2+z}{2}} + \frac{e^{\frac{3+z}{2}}}{2}\frac{3+z}{1+z} (n-3)^{\frac{2+z}{2}}
\end{eqnarray}

\noindent and
\begin{eqnarray} \label{pec12}
 \mathbb{E}[ G_{n-2}^{n-2}(z) Z^{n-2}(n-2,1)]  \leq e^{\frac{2+z}{2}} \mathbb{E}[G_{n-3}^{n-2}(z)] + \frac{1}{2} (n-3)^{-1},
\end{eqnarray}

\noindent for $n \geq 4$. Next, we define the functions
\begin{eqnarray*}
 A_{n}^{1}(z) = \left( e^{\frac{2+z}{2}} + \frac{e^{\frac{1+z}{2}}}{2}\frac{3+z}{1+z} \right) (n-3)^{-\frac{1}{2}}, \hspace*{5mm}  A_{n}^{2}(z) = \left( e^{\frac{2+z}{2}} + \frac{1}{2} (n-3)^{-\frac{3+z}{2}} \right) (n-3)^{-1}
\end{eqnarray*}

\noindent and
\begin{eqnarray*}
 A_{n}^{3}(z) = 2e^{\frac{2+z}{2}} + e^{\frac{1}{2}}(n-3)^{-\frac{4+z}{2}} + e^{\frac{3+z}{2}} \frac{3 + z}{1+z} ,
\end{eqnarray*}

\noindent for $n \geq4$ and $z \in (0,1)$. Then one can find $z_{0} \in (0,1)$ such that
\begin{eqnarray*}
 3^{-\frac{1+z_{0}}{2}} + \frac{z_{0}}{2} \left(A_{4}^{1}(z_{0}) + A_{4}^{2}(z_{0}) + A_{4}^{3}(z_{0}) +  \frac{1}{2} \right) \leq 1.
\end{eqnarray*}

\noindent Now, we proceed to prove by induction (\ref{pec9}) with $z_{0} \in (0,1)$ such that the previous inequality is satisfied. For $i =2,3$, it must be clear since
\begin{eqnarray*}
 \mathbb{E}[G_{1}^{2}(z_{0})] = 1 \hspace*{4mm} \text{and} \hspace*{4mm} \mathbb{E}[G_{2}^{3}(z_{0})] = 1 + z_{0}.
\end{eqnarray*}

\noindent Suppose that it is true for $i=n -1\geq 2$. 
We observe from (\ref{pec5}) and the inequalities (\ref{pec10}), (\ref{pec11}) and (\ref{pec12}) that
\begin{eqnarray*}
 \mathbb{E}[G_{n-1}^{n}(z_{0})] & \leq & 1 +  (n-1)^{\frac{1+z_{0}}{2}} \frac{z_{0}}{2} \left( A_{n}^{1}(z_{0}) + A_{n}^{2}(z_{0}) + \frac{1}{2} + A_{n}^{3}(z_{0}) \right) \\
 & \leq &    (n-1)^{\frac{1+z_{0}}{2}} \left( 3^{-\frac{1+z_{0}}{2}} + \frac{z_{0}}{2} \left( \frac{1}{2} + A_{4}^{1}(z_{0}) + A_{4}^{2}(z_{0}) + A_{4}^{3}(z_{0}) \right)\right) \\
 & \leq &   (n-1)^{\frac{1+z_{0}}{2}},
\end{eqnarray*}

\noindent the second inequality is because the functions $A_{n}^{1}(\cdot)$, $A_{n}^{2}(\cdot)$ and $A_{n}^{1}(\cdot)$ are decreasing with respect to $n$ and the last one is by our choice of $z_{0}$.
\end{proof}

Finally, we have all the ingredients to prove Lemma \ref{lema3}.

\begin{proof}[Proof of Lemma \ref{lema3}] We deduce from the inequality 
(\ref{pec2}) that for $n \geq 2$ we have 
\begin{eqnarray*} 
 \prod_{j=i-1}^{n-1} \frac{2j+1+z}{2j} \leq e^{\frac{1+z}{2}} \left(\frac{n-1}{i-1} \right)^{\frac{1+z}{2}} \hspace*{5mm} \text{for} \hspace*{2mm} i \geq 2.
\end{eqnarray*}

\noindent We consider $z_{0} \in (0,1)$ such that equation (\ref{pec9}) in 
Lemma \ref{lema5} is satisfied. Then from (\ref{pec1}) and (\ref{pec4}) we have that
\begin{eqnarray*}
 \mathbb{E}[G_{n}^{1}(z_{0})] \leq e^{\frac{1+z_{0}}{2}} n^{\frac{1+z_{0}}{2}} \hspace*{5mm} \text{for} \hspace*{2mm} i \geq 1 \hspace*{2mm} \text{and} \hspace*{2mm} n \geq 1,
\end{eqnarray*}

\noindent which is our claim.
\end{proof}

\paragraph{Acknowledgements.} I am grateful to Jean Bertoin for introducing me to the topic and 
for many fruitful discussions. I would also like to thank the two anonymous 
referees whose suggestions and remarks helped to improve this paper.\\

This work is supported by the Swiss National Science Foundation 200021\_144325/1


\begin{thebibliography}{10}

\bibitem{A}
D.~Aldous, \emph{The continuum random tree. {III}}, Ann. Probab. \textbf{21}
  (1993), 248--289. 

\bibitem{Ba}
A.-L. Barab{\'a}si and R.~Albert, \emph{Emergence of scaling in random
  networks}, Science \textbf{286} (1999), 509--512. 

\bibitem{bau2}
E.~Baur, \emph{Percolation on random recursive trees}, {\rm To appear in}
  Random Structures Algorithms (2015).

\bibitem{Bau}
E.~Baur and J.~Bertoin, \emph{Cutting edges at random in large recursive
  trees}, Stochastic Analysis and Applications 2014 (D.~Crisan, B.~Hambly, and
  T.~Zariphopoulou, eds.), Springer Proceedings in Mathematics \& Statistics,
  vol. 100, Springer International Publishing, 2014, p.~51–76.

\bibitem{Be4}
J.~Bertoin, \emph{Almost giant clusters for percolation on large trees with
  logarithmic heights}, J. Appl. Probab. \textbf{50} (2013), 603--611.

\bibitem{Be5}
J.~Bertoin, \emph{Sizes of the largest clusters for supercritical percolation on
  random recursive trees}, Random Structures Algorithms \textbf{44} (2014),
  29--44. 

\bibitem{Be}
J.~Bertoin, \emph{Fires on trees}, Ann. Inst. Henri Poincar\'e Probab. Stat.
  \textbf{48} (2012), 909--921. 

\bibitem{Be3}
J.~Bertoin, \emph{The cut-tree of large recursive trees}, Ann. Inst. Henri
  Poincar\'e Probab. Stat. \textbf{51} (2015), no.~2, 478--488. \MR{3335011}

\bibitem{Be2}
J.~Bertoin and G.~Miermont, \emph{The cut-tree of large {G}alton-{W}atson trees
  and the {B}rownian {CRT}}, Ann. Appl. Probab. \textbf{23} (2013), 1469--1493.
 
\bibitem{Be6}
J.~Bertoin and G.~Uribe~Bravo, \emph{Supercritical percolation on large
  scale-free random trees}, Ann. Appl. Probab. \textbf{25} (2015), no.~1,
  81--103. 

\bibitem{BoWa}
N.~Broutin and M.~Wang, \emph{Cutting down $p$-trees and inhomogeneous
  continuum random trees}, Submitted (2014).

\bibitem{D22}
L.~Devroye, \emph{Applications of the theory of records in the study of random
  trees}, Acta Inform. \textbf{26} (1988), no.~1-2, 123--130. \MR{969872}

\bibitem{luc}
L.~Devroye, \emph{Universal limit laws for depths in random trees}, SIAM J.
  Comput. \textbf{28} (1999), 409--432. \MR{1634354 (2000e:68073)}

\bibitem{Da}
D.~Dieuleveut, \emph{The vertex-cut-tree of {G}alton-{W}atson trees converging
  to a stable tree}, Ann. Appl. Probab. \textbf{25} (2015), no.~4, 2215--2262.

\bibitem{Drmota2}
M.~Drmota, \emph{Random trees, an interplay between combinatorics and
  probability}, Springer, 2009.

\bibitem{Drmota}
M.~Drmota, A.~Iksanov, M.~Moehle, and U.~Roesler, \emph{A limiting distribution
  for the number of cuts needed to isolate the root of a random recursive
  tree}, Random Structures Algorithms \textbf{34} (2009), 319--336. 

\bibitem{Greven}
A.~Greven, P.~Pfaffelhuber, and A.~Winter, \emph{Convergence in distribution of
  random metric measure spaces ({$\Lambda$}-coalescent measure trees)}, Probab.
  Theory Related Fields \textbf{145} (2009), no.~1-2, 285--322. 

\bibitem{Gromov}
M.~Gromov, \emph{Metric structures for {R}iemannian and non-{R}iemannian
  spaces}, Progress in Mathematics, vol. 152, Birkh\"auser Boston, MA MR
  1699320, 1999. 

\bibitem{Haas}
B.~Haas and G.~Miermont, \emph{Scaling limits of {M}arkov branching trees with
  applications to {G}alton-{W}atson and random unordered trees}, Ann. Probab.
  \textbf{40} (2012), 2589--2666. 

\bibitem{Holmgren}
C.~Holmgren, \emph{Random records and cuttings in binary search trees}, Combin.
  Probab. Comput. \textbf{19} (2010), 391--424. 

\bibitem{Holmgren2}
C.~Holmgren, \emph{A weakly 1-stable distribution for the number of random records
  and cuttings in split trees}, Adv. in Appl. Probab. \textbf{43} (2011),
  151--177. 

\bibitem{Iksanov}
A.~Iksanov and M.~M{\"o}hle, \emph{A probabilistic proof of a weak limit law
  for the number of cuts needed to isolate the root of a random recursive
  tree}, Electron. Comm. Probab. \textbf{12} (2007), 28--35.
  
\bibitem{Svante2}
S.~Janson, \emph{Random records and cuttings in complete binary trees},
  Mathematics and computer science. {III}, Trends Math., Birkh\"auser, Basel,
  2004, pp.~241--253. 

\bibitem{Svante}
S.~Janson, \emph{Random cutting and records in deterministic and random trees},
  Random Structures Algorithms \textbf{29} (2006), 139--179. 
  
\bibitem{Ka}
Z.~Katona, \emph{Width of a scale-free tree}, J. Appl. Probab. \textbf{42}
  (2005), no.~3, 839--850. 

\bibitem{Ka2}
Z.~Katona, \emph{Levels of a scale-free tree}, Random Structures Algorithms
  \textbf{29} (2006), no.~2, 194--207. 

\bibitem{Kuba}
M.~Kuba and A.~Panholzer, \emph{Multiple isolation of nodes in recursive
  trees}, Online J. Anal. Comb. (2014), 26.

\bibitem{Lo}
W.~L{\"o}hr, \emph{Equivalence of {G}romov-{P}rohorov- and {G}romov's
  {$\underline\square_\lambda$}-metric on the space of metric measure spaces},
  Electron. Commun. Probab. \textbf{18} (2013), no. 17, 10. 

\bibitem{Mah}
H.~M. Mahmoud and R.~Neininger, \emph{Distribution of distances in random
  binary search trees}, Ann. Appl. Probab. \textbf{13} (2003), 253--276.

\bibitem{MaM}
A.~Meir and J.~W. Moon, \emph{Cutting down random trees}, J. Austral. Math.
  Soc. \textbf{11} (1970), 313--324. 

\bibitem{MaM2}
A.~Meir and J.~Moon, \emph{Cutting down recursive trees}, Mathematical
  Biosciences \textbf{21} (1974), 173--181.

\bibitem{Mo}
T.~F. M{\'o}ri, \emph{The maximum degree of the {B}arab\'asi-{A}lbert random
  tree}, Combin. Probab. Comput. \textbf{14} (2005), no.~3, 339--348.

\bibitem{Panh}
A.~Panholzer, \emph{Cutting down very simple trees}, Quaest. Math. \textbf{29}
  (2006), 211--227. 

\end{thebibliography}
 
\providecommand{\bysame}{\leavevmode\hbox to3em{\hrulefill}\thinspace}
\providecommand{\MR}{\relax\ifhmode\unskip\space\fi MR }
\providecommand{\MRhref}[2]{%
  \href{http://www.ams.org/mathscinet-getitem?mr=#1}{#2}
}
\providecommand{\href}[2]{#2}

\end{document}